\theoremstyle{plain}
\newtheorem{thm}{Theorem}
\newtheorem{prop}[thm]{Proposition}
\newtheorem{cor}[thm]{Corollary}
\theoremstyle{definition}
\newtheorem{defn}[thm]{Definition}
\newtheorem{exmp}[thm]{Example}
\newtheorem{rem}[thm]{Remark}
\newtheorem{que}[thm]{Question}
\title{Noncommutatively Graded Algebras}
\date{}
\author{Patrik Nystedt}
\address{University West,
Department of Engineering Science, 
SE-46186 Trollh\"{a}ttan, Sweden}
\email{Patrik.Nystedt@hv.se}
\subjclass[2010]{20M30; % Representation of semigroups; actions of semigroups on sets
20L05; % Groupoids
18B40}
\keywords{Graded algebra; Graded ring}
\begin{document}

\maketitle

\begin{abstract}
Inspired by the commutator and anticommutator 
algebras derived from algebras graded by groups, we introduce 
noncommutatively graded algebras.
We generalize various classical graded results 
to the noncommutatively graded situation
concerning identity elements, inverses,
existence of limits and colimits and
adjointness of certain functors.
In the particular instance of noncommutati\-vely
graded Lie algebras, we establish the existence
of universal graded enveloping algebras and we show a 
graded version of the Poincar\'{e}-Birkhoff-Witt theorem.
\end{abstract}

\section{Introduction}

Let $R$ denote a ring which is unital, commutative and associative.
Suppose that $A$ is an $R$-algebra. By this we mean that $A$ is 
a left $R$-module equipped with an $R$-bilinear map 
$A \times A \ni (a,b) \mapsto ab \in A$.
Let $G$ be a group. 

Recall that $A$ is called $G$-graded if there is a family 
$\{ A_g \}_{g \in G}$ of $R$-submodules of $A$ such that
\begin{equation}\label{directsum}
A = \oplus_{g \in G} A_g,
\end{equation}
as $R$-modules, and for all 
$g,h \in G$ the inclusion
\begin{equation}\label{graded} 
A_g A_h \subseteq A_{gh}
\end{equation}
holds. 
The category of $G$-graded $R$-algebras,
here denoted by $G$-GA, is obtained by taking
$G$-graded $R$-algebras as objects and
for the morphisms between such objects
$A$ and $B$ we take the $R$-algebra homomorphisms
$f : A \rightarrow B$ satisfying %such that for every $g \in G$ 
$f(A_g) \subseteq B_g$, %holds.
for $g \in G$.

Graded algebras include as special cases many other constructions
such as polynomial and skew polynomial rings, 
Ore extensions,
matrix rings, 
Morita contexts,
group rings,
twisted group rings,
skew group rings and
crossed products.
Therefore, the theory of graded algebras
not only gives new results for several constructions
simultaneously, but also serves as a unification of known theorems.
For more details concerning graded algebras,
see e.g. \cite{nas1982} or \cite{nas2004} and the references therein.

The motivation for the present article is the observation
that there are many natural examples of algebras which 
satisfy (\ref{directsum}) but only a weaker 
form of (\ref{graded}).
Namely, suppose that
$A$ is an associative $G$-graded algebra. Then the induced Lie algebra
$(A , [ \cdot , \cdot ] )$ 
(see \cite[p. 3]{schafer1995})
and the induced Jordan algebra 
$(A , \{ \cdot , \cdot \} )$
(see \cite[p. 4]{schafer1995}, where 
\begin{equation}\label{defcommutator}
A \times A \ni (a,b) \mapsto [a,b] = ab-ba \in A
\end{equation}
is the commutator and
$$A \times A \ni (a,b) \mapsto \{ a, b \} = ab+ba \in A$$ 
is the anticommutator, satisfy
\begin{equation}\label{commutator}
[ A_g , A_h ] \subseteq A_{gh} + A_{hg}
\end{equation}
and
\begin{equation}\label{anticommutator}
\{ A_g , A_h \} \subseteq A_{gh} + A_{hg}
\end{equation}
respectively, for all $g,h \in G$
(see also Example \ref{lambdamu}).
Inspired by (\ref{commutator}) and (\ref{anticommutator}) we say that
an $R$-algebra $A$ is {\it noncommutatively $G$-graded} if there is a family 
$\{ A_g \}_{g \in G}$ of $R$-submodules of $A$ satisfying
(\ref{directsum}), as $R$-modules, and for all $g,h \in G$ the inclusion
$A_g A_h \subseteq A_{gh} + A_{hg}$
holds.
The aim of this article is to generalize various $G$-graded classical results
to the noncommutatively $G$-graded situation.
Here is an outline of the article.

In Section \ref{gradedalgebras}, we introduce the category
$G$-NCGA of noncommutatively $G$-graded algebras
(see Definition \ref{definitionnoncommutativelygraded}) and we show 
results concerning identity elements, inverses,
existence of limits and colimits and
adjointness of certain functors related to $G$-NCGA (see Propositions \ref{unit}-\ref{adjointagain}).
In Section \ref{gradedtensoralgebra}, we fix the notation
concerning $G$-graded modules and we recall the construction
of the graded tensor algebra
(see Propositions \ref{gradingdirectsum}-\ref{tensor}). 
In Section \ref{gradedliealgebras}, we study the
particular instance of noncommutatively $G$-graded Lie algebras
(see Definition \ref{gradedliealgebras}),
we establish the existence of universal graded enveloping algebras 
(see Proposition \ref{lieenveloping}) and we show a 
graded version of the Poincar\'{e}-Birkhoff-Witt theorem
(see Proposition \ref{pbw}).

\section{Graded algebras}\label{gradedalgebras}

In this section, we introduce the category of
$G$-NCGA of noncommutatively $G$-graded algebras
(see Definition \ref{definitionnoncommutativelygraded}) and we show 
results concerning identity elements, inverses,
existence of limits and colimits and
adjointness of certain functors related to $G$-NCGA (see Propositions \ref{unit}-\ref{adjoint}).

For the rest of this article, $R$ denotes 
a unital, commutative and associative ring,
$A$ denotes an $R$-algebra
and $G$ denotes a multiplicatively written group 
with identity element $e$.
If $A$ is unital, then we let $1$ denote the
multiplicative identity of $A$.

\begin{defn}\label{definitionnoncommutativelygraded}
We say that $A$ is noncommutatively $G$-graded if there is a family 
$\{ A_g \}_{g \in G}$ of $R$-submodules of $A$
such that $A = \oplus_{g \in G} A_g$, as $R$-modules, and 
for all $g,h \in G$ the inclusion $A_g A_h \subseteq A_{gh} + A_{hg}$ holds.
The category of noncommutatively $G$-graded algebras,
denoted by $G$-NCGA, is obtained by taking
noncommutatively $G$-graded algebras as objects 
and for the morphisms between such objects
$A$ and $B$ we take the $R$-algebra homomorphisms
$f : A \rightarrow B$ satisfying
$f(A_g) \subseteq B_g$, for $g \in G$. 
\end{defn}

\begin{rem}
Clearly, there is an inclusion of $G$-GA in $G$-NCGA.
If $G$ is abelian, then $G$-NCGA coincides with $G$-GA.
\end{rem}

\begin{exmp}\label{lambdamu}
Let $A$ be a $G$-graded algebra.
From $A$ it is easy to construct many examples of noncommutatively $G$-graded algebras.
Indeed, for all $g,h \in G$, take $\lambda_{g,h} , \mu_{h,g} \in R$.
Define a new product $\bullet$ on $A$ by the additive extension
of the relations $a_g \bullet b_h = \lambda_{g,h} a_g a_h + \mu_{h,g} a_h a_g,$
for $g,h \in G$, $a_g \in A_g$ and $b_h \in B_h$.
We will denote this algebra by $(A , \lambda , \mu)$. 
Note that if we take $\lambda \equiv 1$ and $\mu \equiv -1$ (or $\mu \equiv 1$),
then $(A , \lambda , \mu)$ coincides with the commutator 
(or anticoommutator) algebra defined by $A$. 
The category $G$-NCGA$_{\lambda,\mu}$ is obtained by taking
$(A,\lambda,\mu)$, for $G$-graded algebras $A$, as objects,
and for morphisms $(A,\lambda,\mu) \rightarrow (B,\lambda,\mu)$
we take graded $R$-algebra morphisms $A \rightarrow B$.
It is clear that the correspondence $A \mapsto (A,\lambda,\mu)$, on objects of $G$-NCGA,
and by the identity, on graded $R$-algebra morphisms, defines a functor 
from $G$-GA to $G$-NCGA$_{\lambda,\mu}$.
We will denote this functor by $(\lambda,\mu)$.
\end{exmp}

\begin{rem}
Suppose that $A$ is a unital $G$-graded algebra.
Then, from \cite[Proposition 1.1.1.1]{nas2004} it follows that
$1 \in A_e$. If for every $g \in G$, the conditions 
$\lambda_{g,e} + \mu_{e,g} = 1 = \lambda_{e,g} + \mu_{g,e}$ hold,
then $(A,\lambda,\mu)$ is also unital with multiplicative identity 1.
If $2$ is invertible in $R$, then this holds if $\lambda \equiv \mu \equiv 2^{-1}$.
\end{rem}

The next result is a generalization of \cite[Proposition 1.1.1.1]{nas2004}.

\begin{prop}\label{unit}
If $A$ is unital and noncommutatively $G$-graded, 
then $1 \in A_e$.
\end{prop}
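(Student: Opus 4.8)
The plan is to mimic the classical argument for $G$-graded algebras, but to track carefully the extra term that the noncommutative grading condition introduces. Write $1 = \sum_{g \in G} u_g$ with $u_g \in A_g$, the sum being finite, and let $S = \{ g \in G : u_g \neq 0 \}$ be the (finite) support. The goal is to show $S \subseteq \{e\}$, i.e. $u_g = 0$ for all $g \neq e$.

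First I would exploit the identity $u_g = u_g \cdot 1 = \sum_{h \in S} u_g u_h$. By Definition \ref{definitionnoncommutativelygraded}, each product $u_g u_h$ lies in $A_{gh} + A_{hg}$, so comparing homogeneous components of the equation $u_g = \sum_{h \in S} u_g u_h$ across the direct sum decomposition $A = \oplus_{k \in G} A_k$ gives relations among the $u_g$'s. Symmetrically, $u_g = 1 \cdot u_g = \sum_{h \in S} u_h u_g$ with $u_h u_g \in A_{hg} + A_{gh}$. The key point is that in the component of degree $g$, on the right-hand side of $u_g = \sum_h u_g u_h$ one gets a contribution from every $h$ with $gh = g$ or $hg = g$ — but both conditions force $h = e$. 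So the degree-$g$ part of $\sum_h u_g u_h$ is (the degree-$g$ part of) $u_g u_e$, which must equal $u_g$; while the degree-$k$ part for $k \neq g$ must vanish.

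Next I would pick $g \in S$ and consider the two-sided products to pin things down. A clean route: show $u_e$ acts as a two-sided identity on the homogeneous pieces, namely that $u_e a_g = a_g u_e = a_g$ for all $a_g \in A_g$, by writing $a_g = a_g \cdot 1 = \sum_h a_g u_h$ and extracting the degree-$g$ component as above (only $h=e$ survives since $gh = g$ or $hg = g$ implies $h=e$), and likewise on the left. Then, if $g \neq e$ and $u_g \neq 0$, apply this to $a_g = u_g$: from $u_e u_g = u_g$ and from the expansion $1 \cdot u_g = \sum_h u_h u_g$ one subtracts the $h=e$ term to conclude $\sum_{h \neq e} u_h u_g = 0$; iterating, or better, looking directly at the component of $1 = 1\cdot 1$ in a suitable degree, yields a contradiction. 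Concretely, with $S$ finite I would choose $g \in S$ maximal in the sense of a length or any total preorder refining the partial structure and derive that the top component cannot be cancelled.

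The main obstacle, and the place where the noncommutative grading genuinely differs from the classical case, is that the product $u_g u_h$ is no longer forced into a single component $A_{gh}$ but may split between $A_{gh}$ and $A_{hg}$; so the bookkeeping of ``which $h$ contributes to degree $k$'' must account for \emph{both} equations $gh = k$ and $hg = k$. The saving grace is that for the specific target degrees that matter (namely $k = g$ with the factor $u_g$ fixed on one side), both equations collapse to $h = e$ regardless of noncommutativity, so the argument goes through essentially unchanged; I expect the only real care needed is to set up the finiteness/cancellation step cleanly so that no hidden use of commutativity of $G$ sneaks in.
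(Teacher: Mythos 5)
Your overall strategy is exactly the paper's: expand $1 = \sum_g u_g$, multiply a homogeneous element on one side by this expansion, and observe that for $h \neq e$ the product lands in $A_{gh} + A_{hg}$ with $gh \neq g$ and $hg \neq g$, so only the $h = e$ term can contribute in degree $g$. The two identities you extract this way, $u_g u_e = u_g = u_e u_g$ and $\sum_{h \neq e} u_h u_g = 0$, are precisely the paper's \eqref{unitsecond} and \eqref{unitfirst}, and your explicit remark that both equations $gh = k$ and $hg = k$ collapse to $h = e$ is the right diagnosis of where noncommutativity of the grading could, but does not, cause trouble.

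The gap is in your last step. The proposed finish --- choosing $g \in S$ ``maximal in the sense of a length or any total preorder'' and arguing that ``the top component cannot be cancelled'' --- does not work for an arbitrary group $G$: there is no length function or multiplication-compatible order available in general, and inspecting the degree-$k$ component of $1 = 1 \cdot 1$ involves all pairs $(g,h)$ with $gh = k$ or $hg = k$, which does not visibly collapse. No such argument is needed, because the identities you already have finish the proof in one line: specializing $\sum_{h \neq e} u_h u_g = 0$ at $g = e$ and using $u_h u_e = u_h$ gives $\sum_{h \neq e} u_h = 0$, hence $1 = u_e \in A_e$; this is exactly how the paper concludes. Alternatively, your observation that $a_g u_e = a_g = u_e a_g$ for every homogeneous $a_g$ shows that $u_e$ is a two-sided identity of $A$, whence $u_e = u_e \cdot 1 = 1$ by uniqueness of the identity. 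Either repair is immediate; you should replace the maximality sketch with one of these.
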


\begin{proof}
Suppose that $1 = \sum_{g \in G} a_g$ for some
$a_g \in R_g$ satisfying $a_g = 0$ for all but finitely many $g \in G$.
Take $h \in G$.
Then 
$$a_h = 1 a_h = \sum_{g \in G} a_g a_h =
a_e a_h + \sum_{g \in G \setminus \{ e\} } a_g a_h.$$
Thus 
$$A_h \ni a_h - a_e a_h =  \sum_{g \in G \setminus \{ e \} } a_g a_h \in 
\sum_{g \in G \setminus \{ e \}} ( A_{gh} + A_{hg} ) \subseteq
\oplus_{g \in G \setminus \{ h \}} A_g.$$
Hence, in particular, we get that %$r_h = r_e r_h$ and
\begin{equation}\label{unitfirst}
\sum_{g \in G \setminus \{ e \} } a_g a_h = 0.
\end{equation}
Also 
$$a_h = a_h 1 = \sum_{g \in G} a_h a_g =
a_h a_e + \sum_{g \in G \setminus \{ e\} } a_h a_g.$$
Thus 
$$A_h \ni a_h - a_h a_e =  \sum_{g \in G \setminus \{ e \} } a_h a_g \in 
\sum_{g \in G \setminus \{ e \}} ( A_{gh} + A_{hg} ) \subseteq
\oplus_{g \in G \setminus \{ h \}} A_g.$$
Hence, in particular, we get that
\begin{equation}\label{unitsecond}
a_h = a_h a_e.
\end{equation}
From (\ref{unitfirst}) and (\ref{unitsecond}) it follows
that $0 = \sum_{g \in G \setminus \{ e \} } a_g a_e = 
\sum_{g \in G \setminus \{ e \} } a_g$.
Thus $$1 = \sum_{g \in G} a_g = a_e + \sum_{g \in G \setminus \{ e \} } a_g =
a_e + 0 = a_e \in A_e.$$
\end{proof}

If $A$ is noncommutatively $G$-graded,
then $\cup_{g \in G} A_g$ is called the set of homogeneous
elements of $A$; if $g \in G$, then a nonzero element $a \in A_g$
is said to be homogeneous of degree $g$.
In that case we write ${\rm deg}(a) = g$.
The following result is a generalization of \cite[Proposition 1.1.1.2]{nas2004}

\begin{prop}\label{inverse}
Suppose that $A$ is unital and noncommutatively $G$-graded.
If $a$ is a non-zero homogeneous element of $A$ such that ${\rm deg}(a)=g$
and $a$ has a right (left) inverse,
then $a$ has a right (left) homogeneous inverse
of degree $g^{-1}$.
\end{prop}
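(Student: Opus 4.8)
The plan is to imitate the classical argument for $G$-graded algebras (\cite[Proposition 1.1.1.2]{nas2004}), the only new point being that in a group one has $gh = e$ if and only if $hg = e$, and this is exactly what makes the extra summand in the inclusion $A_g A_h \subseteq A_{gh} + A_{hg}$ harmless for the component of degree $e$. By symmetry it is enough to treat the case of a right inverse; the left-inverse case is obtained by reversing the order of all products throughout.

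So suppose $a \in A_g$ is nonzero and $b \in A$ satisfies $ab = 1$. First I would write $b = \sum_{h \in G} b_h$ with $b_h \in A_h$ and $b_h = 0$ for all but finitely many $h \in G$, and invoke Proposition \ref{unit} to record that $1 \in A_e$. Next I would expand $1 = ab = \sum_{h \in G} a b_h$ and locate each term using the grading condition: $a b_h \in A_g A_h \subseteq A_{gh} + A_{hg}$ for every $h \in G$. The key observation is that $gh = e \iff h = g^{-1} \iff hg = e$. Hence $a b_{g^{-1}} \in A_e + A_e = A_e$, whereas for $h \neq g^{-1}$ both $gh \neq e$ and $hg \neq e$, so $a b_h \in \oplus_{k \in G \setminus \{ e \}} A_k$. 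Consequently $1 - a b_{g^{-1}} = \sum_{h \in G \setminus \{ g^{-1} \}} a b_h$ lies in $\oplus_{k \in G \setminus \{ e \}} A_k$, while the left-hand side lies in $A_e$; by directness of the decomposition $A = \oplus_{k \in G} A_k$ it must vanish. Thus $a b_{g^{-1}} = 1$, and since $a \neq 0$ forces $A \neq 0$ and hence $1 \neq 0$, the element $b_{g^{-1}}$ is nonzero; it is therefore a homogeneous right inverse of $a$ of degree $g^{-1}$.

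There is essentially no hard step; the one thing to be careful about is the equivalence $gh = e \iff hg = e$, which is special to groups and is what prevents the degree-$e$ part of $1$ from receiving contributions from more than one value of $h$ (in a merely associative-magma setting the argument would break down). Finally, for the left-inverse case one writes the left inverse as $\sum_{h \in G} c_h$ with $c_h \in A_h$, uses $c_h a \in A_h A_g \subseteq A_{hg} + A_{gh}$, and runs the identical projection onto $A_e$ to get $c_{g^{-1}} a = 1$ with $c_{g^{-1}}$ homogeneous of degree $g^{-1}$.
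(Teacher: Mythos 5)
Your proposal is correct and follows essentially the same route as the paper: expand the inverse into homogeneous components, use $ab_h \in A_{gh}+A_{hg}$ together with $gh=e \iff h=g^{-1} \iff hg=e$ to see that all terms with $h \neq g^{-1}$ land in $\oplus_{k \neq e} A_k$, and conclude $ab_{g^{-1}}=1$ by directness of the decomposition (and symmetrically for the left inverse). Your explicit remark that the equivalence $gh=e \iff hg=e$ is the group-theoretic fact making the extra summand harmless is a nice touch the paper leaves implicit, but the argument is the same.
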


\begin{proof}
First show the ''right'' part of the proof.
Suppose that there is $b \in A$ such that $ab = 1$.
Suppose that $b = \sum_{h \in G} b_h$ for some $b_h \in A_h$
such that $b_h = 0$ for all but finitely many $h \in G$.
Then 
$$1 = ab = \sum_{h \in G} a b_h = 
a b_{g^{-1}} + \sum_{h \in G \setminus \{ g^{-1} \} } a b_h.$$
Thus, from Proposition \ref{unit}, we get that
$$A_e \ni 1 - a b_{g^{-1}} = \sum_{h \in G \setminus \{ g^{-1} \} } a b_h \in 
\sum_{ h \in G \setminus \{ g^{-1} \} } ( A_{gh} + A_{hg} ) \subseteq
\oplus_{g \in G \setminus \{ e \}} A_g.$$
Therefore $1 - a b_{g^{-1}} = 0$ and thus $a b_{g^{-1}} = 1$.

Now we show the ''left'' part of the proof.
Suppose that there is $c \in A$ such that $ca = 1$.
Suppose that $c = \sum_{h \in G} c_h$ for some $c_h \in A_h$
such that $c_h = 0$ for all but finitely many $h \in G$.
Then 
$$1 = ca = \sum_{h \in G} c_h a = 
c_{g^{-1}} a + \sum_{h \in G \setminus \{ g^{-1} \} } c_h a.$$
Thus, from Proposition \ref{unit}, we get that
$$A_e \ni 1 - c_{g^{-1}} a = \sum_{h \in G \setminus \{ g^{-1} \} } c_h a \in 
\sum_{ h \in G \setminus \{ g^{-1} \} } ( A_{hg} + A_{gh} ) \subseteq
\oplus_{g \in G \setminus \{ e \}} A_g.$$
Therefore $1 - c_{g^{-1}} a = 0$ and thus $c_{g^{-1}} a = 1$.
\end{proof}

\begin{prop}
Inverse limits exist in $G$-{\rm NCGA}.
\end{prop}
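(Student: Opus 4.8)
The plan is to realize the inverse limit of a system $(A_i, f_{ij})_{i \in I}$ in $G$-NCGA, with transition maps $f_{ij} : A_j \to A_i$, as an explicit $R$-subalgebra of the direct product $\prod_{i \in I} A_i$ carrying a componentwise grading. For each $g \in G$ put
\[
L_g = \bigl\{ (a_i)_i \in \prod_{i \in I} A_i \mid a_i \in (A_i)_g \text{ for all } i, \text{ and } f_{ij}(a_j) = a_i \text{ for all transition maps } f_{ij} \bigr\},
\]
and set $L = \oplus_{g \in G} L_g$, regarded as an $R$-submodule of $\prod_{i \in I} A_i$; the sum is direct because the $R$-submodules $\prod_{i \in I} (A_i)_g$ of $\prod_{i \in I} A_i$ are independent, each $A_i$ being the direct sum of its homogeneous components. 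The structure maps $\pi_i : L \to A_i$ will be the restrictions of the product projections; by construction each $\pi_i$ is an $R$-algebra homomorphism satisfying $\pi_i(L_g) \subseteq (A_i)_g$ and $f_{ij} \circ \pi_j = \pi_i$.

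First I would verify that $L$ is closed under multiplication and that $L_g L_h \subseteq L_{gh} + L_{hg}$, so that $L$ becomes an object of $G$-NCGA and each $\pi_i$ a morphism in it. For homogeneous $x \in L_g$ and $y \in L_h$, the $i$-th coordinate of the product $xy$ lies in $(A_i)_g (A_i)_h \subseteq (A_i)_{gh} + (A_i)_{hg}$; writing each such coordinate as the sum of its uniquely determined degree-$gh$ and degree-$hg$ parts and using that every $f_{ij}$ is a graded $R$-algebra homomorphism, one checks that these two families of parts are themselves compatible with the transition maps, hence define elements of $L_{gh}$ and of $L_{hg}$ whose sum is $xy$. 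Bilinearity then extends this to arbitrary elements of $L$, and in the process shows $L$ is an $R$-subalgebra.

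Next I would check the universal property. Let $B$ be an object of $G$-NCGA equipped with morphisms $\phi_i : B \to A_i$ satisfying $f_{ij} \circ \phi_j = \phi_i$, and define $\phi : B \to \prod_{i \in I} A_i$ by $\phi(b) = (\phi_i(b))_i$. For homogeneous $b \in B_g$, each $\phi_i(b)$ lies in $(A_i)_g$ and the family $(\phi_i(b))_i$ is compatible with the transition maps, so $\phi(b) \in L_g$; since $B = \oplus_{g \in G} B_g$, this yields $\phi(B) \subseteq \oplus_{g \in G} L_g = L$. Hence $\phi$ corestricts to a morphism $B \to L$ in $G$-NCGA with $\pi_i \circ \phi = \phi_i$ for every $i$, and it is the unique such morphism: any two morphisms $B \to L$ that agree after composition with every $\pi_i$ coincide as maps into $\prod_{i \in I} A_i$, hence as maps into $L$.

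The point that must be handled with care --- and the only genuine obstacle --- is the passage from $\prod_{i \in I} A_i$ down to the subobject $L$. The inverse limit of the underlying $R$-algebras, namely $\{ (a_i)_i \mid f_{ij}(a_j) = a_i \text{ for all } f_{ij} \}$, is in general \emph{not} noncommutatively $G$-graded, since a compatible family may involve homogeneous components of unboundedly many degrees and so fail to lie in the direct sum of the $L_g$. Cutting down to $L$ remedies this, and the reason it does not spoil the universal property is exactly that objects of $G$-NCGA are, by definition, the direct sum --- not the direct product --- of their homogeneous components, which forces every cone over the system to factor through $L$. (The empty inverse system gives the terminal object of $G$-NCGA, the zero algebra with trivial grading, as the degenerate instance of this construction.)
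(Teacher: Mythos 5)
Your construction is the same as the paper's: you form the submodule $L=\oplus_{g\in G}L_g$ of compatible families of homogeneous elements inside $\prod_i A_i$ (the paper's $Q'=\oplus_g Q_g'$ inside the algebra inverse limit $Q$), verify $L_gL_h\subseteq L_{gh}+L_{hg}$ coordinatewise, and check the universal property by corestricting the induced map into the product. Your proof is correct, and if anything slightly more careful than the paper's at the one delicate point, namely that the degree-$gh$ and degree-$hg$ parts of the coordinates of $xy$ are themselves compatible families because the transition maps are graded.
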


\begin{proof}
Suppose that we are given a preordered set $(I,\leq)$ and
a family of noncommutatively $G$-graded $R$-algebras
$( A_{\alpha} )_{\alpha \in I}$.
For all $\alpha,\beta \in I$ with $\alpha \leq \beta$
let $f_{\alpha \beta} : A_{\beta} \rightarrow A_{\alpha}$
be a morphism in $G$-NCGA.
Suppose that the the morphisms $f_{\alpha \beta}$ form an inverse system,
that is, that the following conditions hold:
\begin{itemize}
\item[(i)] the relations $\alpha \leq \beta \leq \gamma$
imply that $f_{\alpha \gamma} = f_{\alpha \beta} \circ f_{\beta \gamma}$;
\item[(ii)] for every $\alpha \in I$, the equality 
$f_{\alpha \alpha} = {\rm id}_{A_{\alpha}}$ holds.
\end{itemize}
Let $P$ denote the product of the sets $A_{\alpha}$
and let $p_{\alpha} : P \rightarrow A_{\alpha}$
denote the corresponding projection.
Let $Q$ denote the subset of all $x \in P$
which satisfy $p_{\alpha}(x) = f_{\alpha \beta} ( p_{\beta}(x) )$
for all $\alpha,\beta \in I$ such that $\alpha \leq \beta$.
Take $r \in R$ and $x,y \in Q$.
Put $r x = ( r p_{\alpha}(x) )_{\alpha \in I}$,
$x + y = ( p_{\alpha}(x) + p_{\alpha}(y) )_{\alpha \in I}$ and
$x y = ( p_{\alpha}(x) p_{\alpha}(y) )_{\alpha \in I}$.
From general results concerning inverse limits of magmas
with operations (see \cite[\S 10]{bourbaki1974}) we know that
this defines a well defined $R$-algebra structure on $Q$
making it an inverse limit in the category of $R$-algebras.
Take $g,h \in G$ and let $Q_g'$ denote all
$x \in Q$ such that for each $\alpha \in I$, the
relation $p_{\alpha}(x) \in (A_{\alpha})_g$ holds.
Put $Q' = \oplus_{g \in G} Q_g'$.
Now we show that $Q'$ is noncommutatively graded.
Take $x \in Q_g'$ and $y \in Q_h'$.
Take $\alpha \in I$. Then 
$p_{\alpha} (xy) = p_{\alpha}(x) p_{\alpha}(y) \in 
( A_{\alpha} )_g ( A_{\alpha} )_h \subseteq 
( A_{\alpha} )_{gh} + ( A_{\alpha} )_{hg}$.
Therefore $Q_g' Q_h' \subseteq Q_{gh}' + Q_{hg}'$.
Now we show that $Q'$ is an inverse limit in the category $G$-NCGA.
For each $\alpha \in I$, let $f_{\alpha}$
denote the map of noncommutatively graded algebras $Q' \rightarrow A_{\alpha}$
defined by restriction of $p_{\alpha}$
and suppose that $u_{\alpha} : F \rightarrow A_{\alpha}$ is a 
graded map for some $G$-noncommutatively graded $R$-algebra
$F$ into $A_{\alpha}$ such that $f_{\alpha \beta} \circ u_{\beta} = u_{\alpha}$
whenever $\alpha \leq \beta$.
Then there exists a unique graded map $u : F \rightarrow Q'$
such that $u_{\alpha} = f_{\alpha} \circ u$ for all $\alpha \in I$.
First we show uniqueness of $u$.
Take $y \in F_g$.
From the relations $u_{\alpha} = f_{\alpha} \circ u$, for $\alpha \in I$,
it follows that $u(y) = ( u_{\alpha}( y ) )_{\alpha \in I}$.
Next we show that $u$ is a well defined morphism in $G$-NCGA.
To this end, suppose that $g \in G$, $y \in F_g$ and $\alpha \leq \beta$.
Then $( f_{\alpha \beta} \circ p_{\beta} ) (u (y) ) =
( f_{\alpha \beta} \circ u_{\beta} ) (y) = u_{\alpha} (y) = p_{\alpha} (u(y))$.
Therefore $u(y) \in Q$. Since $y \in F_g$ we get that 
$u_{\alpha}(y) \in ( A_{\alpha} )_g$ and thus $u(y) \in Q_g'$.
Since each $u_{\alpha}$ is an
$R$-algebra homomorphism, the same holds for $u$. 
\end{proof}

\begin{prop}
Direct limits exist in $G$-{\rm NCGA}.
\end{prop}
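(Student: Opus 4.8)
The plan is to mirror the previous proposition, replacing products by coproducts and subobjects by quotients. I would first take $(I,\leq)$ to be a directed set (as is standard for direct limits) and a direct system: noncommutatively $G$-graded $R$-algebras $(A_\alpha)_{\alpha \in I}$ together with morphisms $f_{\beta \alpha} : A_\alpha \rightarrow A_\beta$ in $G$-NCGA for $\alpha \leq \beta$, satisfying $f_{\gamma \alpha} = f_{\gamma \beta} \circ f_{\beta \alpha}$ for $\alpha \leq \beta \leq \gamma$ and $f_{\alpha \alpha} = {\rm id}_{A_\alpha}$. From general results concerning direct limits of magmas with operations (see \cite[\S 10]{bourbaki1974}) there is an $R$-algebra $A$ together with $R$-algebra homomorphisms $f_\alpha : A_\alpha \rightarrow A$ with $f_\beta \circ f_{\beta \alpha} = f_\alpha$ whenever $\alpha \leq \beta$, which is a direct limit in the category of $R$-algebras; concretely, $A$ is the quotient of $\oplus_{\alpha \in I} A_\alpha$ by the $R$-submodule generated by the elements $a - f_{\beta \alpha}(a)$ for $\alpha \leq \beta$ and $a \in A_\alpha$, with $f_\alpha$ induced by the canonical injection. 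Since $I$ is directed, every element of $A$ has the form $f_\alpha(a)$ for some $\alpha \in I$ and $a \in A_\alpha$, and $f_\alpha(a) = 0$ if and only if $f_{\beta \alpha}(a) = 0$ for some $\beta \geq \alpha$.

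Next I would grade $A$. For $g \in G$, let $A_g$ denote the $R$-submodule of $A$ generated by $\cup_{\alpha \in I} f_\alpha((A_\alpha)_g)$. Since each $A_\alpha = \oplus_{g \in G} (A_\alpha)_g$ and the maps $f_\alpha$ are additive and jointly surjective, $A = \sum_{g \in G} A_g$. To see that this sum is direct, suppose $\sum_{g \in G} x_g = 0$ with $x_g \in A_g$ and $x_g = 0$ for all but finitely many $g$. Writing each $x_g$ as a finite sum of terms $f_\beta(a)$ with $a$ homogeneous of degree $g$ and using directedness, one finds a single $\beta \in I$ and homogeneous $a_g \in (A_\beta)_g$ with $f_\beta(a_g) = x_g$ for every $g$ in the support; then $f_\beta(\sum_g a_g) = 0$, so there is $\gamma \geq \beta$ with $\sum_g f_{\gamma \beta}(a_g) = 0$ in $A_\gamma$, and since $A_\gamma = \oplus_{g \in G}(A_\gamma)_g$ with $f_{\gamma \beta}(a_g) \in (A_\gamma)_g$, each $f_{\gamma \beta}(a_g) = 0$, whence $x_g = f_\gamma(f_{\gamma \beta}(a_g)) = 0$. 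Thus $A = \oplus_{g \in G} A_g$. For the inclusion $A_g A_h \subseteq A_{gh} + A_{hg}$ it suffices to consider products of generators $f_\alpha(a) f_\beta(b)$ with $a \in (A_\alpha)_g$ and $b \in (A_\beta)_h$; choosing $\delta \geq \alpha, \beta$ and putting $a' = f_{\delta \alpha}(a) \in (A_\delta)_g$, $b' = f_{\delta \beta}(b) \in (A_\delta)_h$, we get $f_\alpha(a) f_\beta(b) = f_\delta(a') f_\delta(b') = f_\delta(a' b') \in f_\delta((A_\delta)_{gh} + (A_\delta)_{hg}) \subseteq A_{gh} + A_{hg}$. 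Hence $A$, with the family $\{ A_g \}_{g \in G}$, is an object of $G$-NCGA, and by construction $f_\alpha((A_\alpha)_g) \subseteq A_g$, so each $f_\alpha$ is a morphism in $G$-NCGA.

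Finally I would check the universal property inside $G$-NCGA. Let $u_\alpha : A_\alpha \rightarrow F$ be morphisms in $G$-NCGA into a noncommutatively $G$-graded $R$-algebra $F$ with $u_\beta \circ f_{\beta \alpha} = u_\alpha$ whenever $\alpha \leq \beta$. By the universal property of $A$ in the category of $R$-algebras there is a unique $R$-algebra homomorphism $u : A \rightarrow F$ with $u \circ f_\alpha = u_\alpha$ for all $\alpha$. To see that $u$ is graded, take $x \in A_g$ and write $x = \sum_i f_{\alpha_i}(a_i)$ for finitely many $\alpha_i \in I$ and $a_i \in (A_{\alpha_i})_g$; then $u(x) = \sum_i u_{\alpha_i}(a_i) \in \sum_i F_g = F_g$. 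So $u$ is a morphism in $G$-NCGA, and it is the unique such, since it is already unique as an $R$-algebra homomorphism with the required property. Therefore $A$ is a direct limit of $(A_\alpha)_{\alpha \in I}$ in $G$-NCGA.

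The only genuinely nonformal point is the directness of the decomposition $A = \oplus_{g \in G} A_g$: one must know that a relation among homogeneous components holding in the colimit already holds, after pushing forward, in a single $A_\gamma$, where it can be split using that $A_\gamma$ is noncommutatively $G$-graded. This is exactly where directedness of $I$ is used, and it is the step I would expect to require the most care; the rest of the argument is a routine dualization of the inverse limit case.
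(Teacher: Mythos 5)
Your proof is correct, but it is organized differently from the paper's. The paper builds the colimit ``from the graded pieces up'': for each $g \in G$ it forms the direct limit $C_g$ of the $R$-modules $(A_\alpha)_g$ separately, sets $C = \oplus_{g \in G} C_g$, and then defines a multiplication on $C$ by hand on homogeneous components (splitting into the cases $gh = hg$ and $gh \neq hg$, with projections onto the $gh$ and $hg$ pieces), after which the bulk of the work is checking that this multiplication is independent of the chosen representatives. You instead form the colimit $A$ of the whole direct system in the category of $R$-algebras, define $A_g$ as the submodule generated by the images $f_\alpha((A_\alpha)_g)$, and then prove that the sum $\sum_g A_g$ is direct; the multiplication comes for free from the ungraded colimit, so your burden shifts entirely to the directness of the decomposition. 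You identify that step correctly as the crux, and your argument for it is the right one: push a vanishing sum of homogeneous elements into a single $A_\gamma$ using directedness and the standard criterion $f_\alpha(a)=0 \Leftrightarrow f_{\beta\alpha}(a)=0$ for some $\beta \geq \alpha$, then split it there. The two approaches use directedness of $I$ at dual points (well-definedness of the product versus directness of the grading), and yours is arguably cleaner since it delegates the algebra structure to the known ungraded colimit rather than reconstructing it; the paper's componentwise construction has the mild advantage of exhibiting each $C_g$ explicitly as $\varinjlim (A_\alpha)_g$ without needing the injectivity analysis. Both verify the universal property in essentially the same way.
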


\begin{proof}
Suppose that we are given a directed set $(I,\leq)$ and
a family of noncommutatively $G$-graded $R$-algebras
$( A_{\alpha} )_{\alpha \in I}$.
For all $\alpha,\beta \in I$ with $\alpha \leq \beta$
let $f_{\beta \alpha} : A_{\alpha} \rightarrow A_{\beta}$
be a morphism in $G$-NCGA.
Suppose that the the morphisms $f_{\beta \alpha}$ form a direct system,
that is, that the following conditions hold:
\begin{itemize}
\item[(i)] the relations $\alpha \leq \beta \leq \gamma$
imply that $f_{\gamma \alpha} = f_{\gamma \beta} \circ f_{\beta \alpha}$;
\item[(ii)] for every $\alpha \in I$, the equality 
$f_{\alpha \alpha} = {\rm id}_{A_{\alpha}}$ holds.
\end{itemize}
Take $g \in G$.
Let $D_g$ denote the direct sum of the sets $( ( A_{\alpha} )_g )_{\alpha \in I}$.
We will identify each $(A_{\alpha})_g$ with its image in $D_g$.
For each $x \in D_g$ let $\lambda_g(x)$ denote the unique element in $I$
such that $x \in ( A_{\lambda_g(x)} )_g$.
Define an equivalence relation $\sim_g$ on $D_g$ by saying that
if $x,y \in D_g$, then $x \sim_g y$ whenever there is $\gamma \in I$
with $\lambda_g(x) \leq \gamma$, $\lambda_g(y) \leq \gamma$ and
$f_{\gamma \lambda_g(x)}(x) = f_{\gamma \lambda_g(y)}(y)$.
Put $C_g = D_g / \sim_g$.
Denote by $(f_{\alpha})_g$ the restriction to $(A_{\alpha})_g$ of the
canonical mapping $f_g$ of $D_g$ onto $C_g$.
Denote by $(f_{\beta\alpha})_g$ the restriction of
$f_{\beta\alpha}$ to $( A_{\alpha} )_g$.
Then it follows that $(f_{\beta})_g \circ ( f_{\beta \alpha} )_g = ( f_{\alpha} )_g$
for $\alpha \leq \beta$.
Then $C_g$ is the direct limit of the $R$-modules $(A_{\alpha})_g$
with a well defined $R$-module structure defined as follows.
Take $r \in R$ and $x,y \in C_g$.
There is $\alpha \in I$ and $x_{\alpha},y_{\alpha} \in D_g$ such that
$x = (f_{\alpha})_g (x_{\alpha})$ and
$y = (f_{\alpha})_g (y_{\alpha})$.
Put $rx = (f_{\alpha})_g (r x_{\alpha})$ and
$x + y = (f_{\alpha})_g (x_{\alpha} + y_{\alpha})$
(for details, see \cite[\S 10]{bourbaki1974}).
Put $C = \oplus_{g \in G} C_g$ (external direct sum of $R$-modules).
Now we will define a multiplication on $C$.
By additivity it is enough to define this on graded components.
Take $g,h \in G$, $x \in C_g$ and $y \in C_h$.
There is $\alpha \in I$, 
$x_{\alpha} \in D_g$ and $y_{\alpha} \in D_h$ such that 
$x = (f_{\alpha})_g (x_{\alpha})$ and
$y = (f_{\alpha})_h (y_{\alpha})$.

Case 1: $gh=hg$. Then put $xy = (f_{\alpha})_{gh}( x_{\alpha} y_{\alpha} )$.

Case 2: $gh \neq hg$. Then put
$xy = ( (f_{\alpha})_{gh} \circ p_{gh} + 
(f_{\alpha})_{hg} \circ p_{hg} ) ( x_{\alpha} y_{\alpha} )$.
Here $p_{gh} : D_{gh} \oplus D_{hg} \rightarrow D_{gh}$ 
and $p_{hg} : D_{gh} \oplus D_{hg} \rightarrow D_{hg}$ 
denote the corresponding projections.

From the definition of this multiplication, it follows that it
is $R$-bilinear and that $C$ is noncommutatively $G$-graded.
Now we show that it is well defined.
To this end, suppose that we take $\beta \in I$, 
$x_{\beta}' \in D_g$ and $y_{\beta}' \in D_h$ such that 
$x = (f_{\beta})_g (x_{\beta}')$ and
$y = (f_{\beta})_h (y_{\beta}')$.
By the definition of the direct limit there exists $\gamma \in I$
such that $\alpha \leq \gamma$, $\beta \leq \gamma$,
$x_{\gamma} := f_{\gamma \alpha} (x_{\alpha}) =
x_{\gamma}' := f_{\gamma \beta} (x_{\beta}')$ and
$y_{\gamma} := f_{\gamma \alpha} (y_{\alpha}) =
y_{\gamma}' := f_{\gamma \beta} (y_{\beta}')$.

Case 1: $gh=hg$. Then 
$$(f_{\alpha})_{gh}( x_{\alpha} y_{\alpha} ) =
(f_{\gamma})_{gh} ( f_{\gamma\alpha} ( x_{\alpha} y_{\alpha} ) ) =
( f_{\gamma} )_{gh} ( f_{\gamma\alpha}(x_{\alpha}) f_{\gamma\alpha}(y_{\alpha}) ) $$
$$= (f_{\gamma})_{gh} ( x_{\gamma} y_{\gamma} ) =
(f_{\gamma})_{gh} ( x_{\gamma}' y_{\gamma}' ) =
( f_{\gamma} )_{gh} ( f_{\gamma\beta}(x_{\beta}') f_{\gamma\beta}(y_{\beta}') ) $$
$$= ( f_{\gamma} )_{gh} ( f_{\gamma\beta} (x_{\beta}' y_{\beta}') ) =
(f_{\beta})_{gh}( x_{\beta}' y_{\beta}' ).$$

Case 2: $gh \neq hg$. Then
$$( (f_{\alpha})_{gh} \circ p_{gh} + 
(f_{\alpha})_{hg} \circ p_{hg} ) ( x_{\alpha} y_{\alpha} ) =$$
$$= (f_{\gamma})_{gh} ( f_{\gamma\alpha} ( p_{gh} ( x_{\alpha} y_{\alpha} ) ) ) +
(f_{\gamma})_{gh} ( f_{\gamma\alpha} ( p_{hg} ( x_{\alpha} y_{\alpha} ) ) ) $$
$$= [\mbox{$f_{\gamma\alpha}$ graded}] = 
(f_{\gamma})_{gh} ( p_{gh} ( f_{\gamma\alpha} ( x_{\alpha} y_{\alpha} ) ) ) +
(f_{\gamma})_{gh} ( p_{hg} ( f_{\gamma\alpha} ( x_{\alpha} y_{\alpha} ) ) ) $$
$$= (f_{\gamma})_{gh} ( p_{gh} ( 
f_{\gamma\alpha} ( x_{\alpha} ) f_{\gamma\alpha} ( y_{\alpha} ) ) ) +
(f_{\gamma})_{gh} ( p_{hg} ( 
f_{\gamma\alpha} ( x_{\alpha} ) f_{\gamma\alpha} y_{\alpha} ) ) ) $$
$$= (f_{\gamma})_{gh} ( p_{gh} ( x_{\gamma}' y_{\gamma}' ) ) +
(f_{\gamma})_{hg} ( p_{hg} ( x_{\gamma}' y_{\gamma}' ) )$$
$$= ( f_{\gamma} )_{gh} ( p_{hg} ( f_{\gamma\beta}(x_{\beta}') f_{\gamma\beta}(y_{\beta}') ) )
+  ( f_{\gamma} )_{hg} ( p_{hg} ( f_{\gamma\beta}(x_{\beta}') f_{\gamma\beta}(y_{\beta}') ) )$$
$$= ( f_{\gamma} )_{gh} ( p_{hg} ( f_{\gamma\beta}(x_{\beta}' y_{\beta}') ) )
+  ( f_{\gamma} )_{hg} ( p_{hg} ( f_{\gamma\beta}(x_{\beta}' y_{\beta}') ) )$$
$$= [ \mbox{$f_{\gamma\beta}$ graded} ] = 
( f_{\gamma} )_{gh} ( f_{\gamma\beta} ( p_{gh} (x_{\beta}' y_{\beta}') ) )
+  ( f_{\gamma} )_{hg} ( f_{\gamma\beta} ( p_{hg} (x_{\beta}' y_{\beta}') ) )$$
$$= ( (f_{\beta})_{gh} \circ p_{gh} + 
(f_{\beta})_{hg} \circ p_{hg} ) ( x_{\beta}' y_{\beta}' ).$$ 
Now we show that $C$ is a direct limit of the $A_{\alpha}$
and the maps $f_{\beta \alpha}$.
Suppose that $\alpha, \beta \in I$ satisfy $\alpha \leq \beta$.
Define $f_{\alpha} : A_{\alpha} \rightarrow C$ in the following way.
Take $a \in A_{\alpha}$. Then $a = \sum_{g \in G} a_g$
for some $a_g \in (A_{\alpha})_g$ such that $a_g = 0$
for all but finitely many $g \in G$.
Put $f_{\alpha}(a) = \sum_{g \in G} (f_{\alpha})_g(a_g)$.
From the fact that (see above) 
$(f_{\beta})_g \circ ( f_{\beta \alpha} )_g = ( f_{\alpha} )_g$,
for $g \in G$, it follows that 
$f_{\beta} \circ f_{\beta \alpha}  = f_{\alpha}$.
Suppose that $F$ is a noncommutatively $G$-graded algebra
and that there are graded algebra maps $u_{\alpha} : A_{\alpha} \rightarrow F$, 
for $\alpha \in I$, satisfying $u_{\beta} \circ f_{\beta\alpha} = u_{\alpha}$,
whenever $\alpha \leq \beta$.
Then there is a unique graded algebra map 
$u : C \rightarrow F$ such that 
$u \circ f_{\alpha} = u_{\alpha}$ for $\alpha \in I$.
In fact, take $g \in G$ and $c_g \in C_g$.
Then there is $a_{\alpha,g} \in A_g$ such that 
$(f_{\alpha})_g(a_{\alpha,g}) = c_g$.
Put $u(c_g) = u_{\alpha} ( a_{\alpha,g} )$.
Then $u \circ f_{\alpha} = u_{\alpha}$ and it is clear from these 
relations that $u$ has to be defined in this way. Thus uniqueness of $u$ follows. 
Now we show that $u$ is well defined.
Suppose that $\alpha \leq \beta$ and put $a_{\beta,g} = f_{\beta\alpha}(a_{\alpha,g})$.
Then $u_{\beta}( a_{\beta,g} ) = 
u_{\beta}( f_{\beta\alpha}(a_{\alpha,g}) ) =
u ( f_{\beta} ( f_{\beta\alpha} ( a_{\alpha,g} ) ) )=
u ( f_{\alpha} ( a_{\alpha,g} ))= 
u_{\alpha} ( a_{\alpha,g} )$.
\end{proof}

Recall that $A^{\rm op}$ equals the algebra $A$
as a left $R$-module, but with a new product $\cdot_{\rm op}$
defined by $a \cdot_{\rm op} b = ba$ for $a,b \in A$.
The next result generalizes \cite[Remark 1.2.4]{nas2004}.

\begin{prop}
Suppose that $A$ is noncommuatively $G$-graded.
Then $A^{\rm op}$ is noncommutatively $G$-graded
with $(A^{\rm op})_g = A_{g^{-1}}$.
Furthermore, the association
$A \mapsto A^{\rm op}$, on objects of $G$-NCGA,
and $f^{\rm op}=f$, on morphisms of $G$-NCGA,
defines an automorphism of the 
category $G$-NCGA.
\end{prop}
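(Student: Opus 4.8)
The plan is to verify the three assertions in order: that $A^{\rm op}$ with the submodules $(A^{\rm op})_g := A_{g^{-1}}$ is an object of $G$-NCGA, that the stated assignment is a well-defined covariant endofunctor of $G$-NCGA, and finally that this endofunctor is its own inverse and hence an automorphism of the category.

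First I would treat the object level. Since $g \mapsto g^{-1}$ is a bijection of $G$, the family $\{ A_{g^{-1}} \}_{g \in G}$ is merely a reindexing of $\{ A_g \}_{g \in G}$, so $A^{\rm op} = A = \oplus_{g \in G} A_g = \oplus_{g \in G} (A^{\rm op})_g$ as $R$-modules. For the grading inclusion, I would fix $g,h \in G$ and compute, using the definition of $\cdot_{\rm op}$ and the hypothesis on $A$:
\[
(A^{\rm op})_g \cdot_{\rm op} (A^{\rm op})_h = A_{h^{-1}} A_{g^{-1}} \subseteq A_{h^{-1}g^{-1}} + A_{g^{-1}h^{-1}} = A_{(gh)^{-1}} + A_{(hg)^{-1}} = (A^{\rm op})_{gh} + (A^{\rm op})_{hg}.
\]
Thus $A^{\rm op}$ is noncommutatively $G$-graded. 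Next, for the morphism level, I would recall the standard fact that the underlying map of an $R$-algebra homomorphism $f : A \rightarrow B$ is again an $R$-algebra homomorphism $A^{\rm op} \rightarrow B^{\rm op}$, since $f(a \cdot_{\rm op} b) = f(ba) = f(b)f(a) = f(a) \cdot_{\rm op} f(b)$ and $R$-linearity (and unitality, when applicable) is unaffected. If in addition $f(A_g) \subseteq B_g$ for all $g$, then $f((A^{\rm op})_g) = f(A_{g^{-1}}) \subseteq B_{g^{-1}} = (B^{\rm op})_g$, so $f^{\rm op} = f$ is a morphism in $G$-NCGA. The identity and composition axioms for a functor hold on the nose because the assignment leaves the underlying maps untouched: $(\mathrm{id}_A)^{\rm op} = \mathrm{id}_{A^{\rm op}}$ and $(g \circ f)^{\rm op} = g \circ f = g^{\rm op} \circ f^{\rm op}$.

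Finally, to see that this endofunctor is an automorphism I would apply it twice: the algebra $(A^{\rm op})^{\rm op}$ has the same underlying $R$-module as $A$ and product $a \cdot' b = b \cdot_{\rm op} a = ab$, so $(A^{\rm op})^{\rm op} = A$ as $R$-algebras, while $((A^{\rm op})^{\rm op})_g = (A^{\rm op})_{g^{-1}} = A_{(g^{-1})^{-1}} = A_g$, so the gradings agree too; and on morphisms the double application is again the identity. Hence the composite of the endofunctor with itself is the identity functor on $G$-NCGA, so it is an isomorphism of categories, i.e.\ an automorphism. I do not expect any substantive obstacle here: the only point requiring care is the index bookkeeping — matching $(A^{\rm op})_{gh}$ and $(A^{\rm op})_{hg}$ with $A_{(gh)^{-1}} = A_{h^{-1}g^{-1}}$ and $A_{(hg)^{-1}} = A_{g^{-1}h^{-1}}$ — and observing that the noncommutative grading condition is symmetric enough (the target $A_{xy} + A_{yx}$ being unordered in $x$ and $y$) that it transports cleanly across the substitution $g \leftrightarrow g^{-1}$.
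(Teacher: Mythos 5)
Your proposal is correct and follows essentially the same route as the paper: verify the grading inclusion $(A^{\rm op})_g \cdot_{\rm op} (A^{\rm op})_h \subseteq (A^{\rm op})_{gh} + (A^{\rm op})_{hg}$ by unwinding the opposite product and using $h^{-1}g^{-1} = (gh)^{-1}$, check that graded morphisms remain graded under the reindexing $g \mapsto g^{-1}$, and conclude from $(A^{\rm op})^{\rm op} = A$ that the functor is an involution, hence an automorphism. You are in fact slightly more careful than the paper in tracking the order of factors in the opposite product and in spelling out the functoriality axioms, but the substance is identical.
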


\begin{proof}
Take $g,h \in G$. Then 
$( A^{\rm op} )_g ( A^{\rm op} )_h =
A_{g^{-1}} A_{h^{-1}} \subseteq 
A_{g^{-1} h^{-1}} + A_{h^{-1} g^{-1}} =
A_{(hg)^{-1}} + A_{(gh)^{-1}} =
( A^{\rm op} )_{hg} + ( A^{\rm op} )_{gh}$.
Suppose that $f : A \rightarrow B$ is a morphism
in $G$-NCGA. Then
$f^{\rm op} ( ( A^{\rm op} )_g ) = f ( A_{g^{-1}} ) \subseteq
B_{g^{-1}} = ( B^{\rm op} )_g$.
Since $(( A )^{\rm op})^{\rm op} = A$, the last
statement follows.
\end{proof}

\begin{prop}
Let $A$ be noncommutatively $G$-graded and
suppose that $\theta : H \rightarrow G$ is a monomorphism of groups. 
Define the $H$-graded additive group $A_H$ by
$(A_H)_h = A_{\theta(h)}$ for $h \in H$.
Then $A_H = \oplus_{h \in H} (A_H)_h$ is a 
noncommutatively $H$-graded algebra.
The correspondence $A \mapsto A_H$, on objects of $G$-NCGA,
and by restriction $f|_{A_H}$, on morphisms 
$f : A \rightarrow B$ of $G$-NCGA, defines a functor 
$(\cdot)_H$ : $G$-NCGA $\rightarrow$ $H$-NCGA.
\end{prop}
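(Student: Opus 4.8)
The plan is to verify first that $A_H$, equipped with the stated $H$-grading, is a well-defined object of $H$-NCGA, and then that restriction of morphisms is functorial.

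For the object part, I would begin by observing that, since $\theta$ is injective, the elements $\theta(h)$, $h \in H$, are pairwise distinct in $G$; hence the internal sum $\sum_{h \in H} A_{\theta(h)}$ taken inside $A$ is already direct, so $A_H := \oplus_{h \in H} (A_H)_h$ is a well-defined $R$-submodule of $A$ and the $H$-grading $h \mapsto (A_H)_h = A_{\theta(h)}$ is unambiguous. Next I would show that $A_H$ is closed under the product of $A$: by $R$-bilinearity it is enough to check this on homogeneous components, so take $h,h' \in H$, $a \in (A_H)_h = A_{\theta(h)}$ and $b \in (A_H)_{h'} = A_{\theta(h')}$. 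Since $A$ is noncommutatively $G$-graded and $\theta$ is a group homomorphism,
$$ ab \in A_{\theta(h)\theta(h')} + A_{\theta(h')\theta(h)} = A_{\theta(hh')} + A_{\theta(h'h)} = (A_H)_{hh'} + (A_H)_{h'h} \subseteq A_H. $$
This single computation shows simultaneously that $A_H$ is a subalgebra of $A$ and that the defining inclusion $(A_H)_h (A_H)_{h'} \subseteq (A_H)_{hh'} + (A_H)_{h'h}$ holds for all $h,h' \in H$, so $A_H$ is noncommutatively $H$-graded (the $R$-module structure being inherited from $A$).

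For morphisms, given $f : A \to B$ in $G$-NCGA, I would note that $f((A_H)_h) = f(A_{\theta(h)}) \subseteq B_{\theta(h)} = (B_H)_h$ for every $h \in H$; in particular $f(A_H) \subseteq B_H$, so $f|_{A_H} : A_H \to B_H$ is a well-defined $R$-algebra homomorphism respecting the $H$-gradings, i.e. a morphism in $H$-NCGA. Finally I would check the two functor axioms: $\mathrm{id}_A|_{A_H} = \mathrm{id}_{A_H}$ is immediate, and for composable morphisms $f : A \to B$ and $g : B \to C$ in $G$-NCGA one has $(g \circ f)|_{A_H} = g|_{B_H} \circ f|_{A_H}$ because the restriction of a composite is the composite of the restrictions. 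This establishes that $(\cdot)_H$ is a functor from $G$-NCGA to $H$-NCGA.

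The argument is essentially routine bookkeeping; the only point requiring any care is the use of injectivity of $\theta$, which is what guarantees that the sum defining $A_H$ stays direct inside $A$ and that the $H$-grading is well-defined (if $\theta(h) = \theta(h')$ for distinct $h,h'$, the assignment $(A_H)_h = A_{\theta(h)}$ would force $(A_H)_h = (A_H)_{h'}$, breaking the direct-sum decomposition). I do not anticipate a genuine obstacle beyond this observation.
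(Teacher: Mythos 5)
Your proof is correct and follows essentially the same route as the paper: the core step is the same one-line computation $(A_H)_{h_1}(A_H)_{h_2} = A_{\theta(h_1)}A_{\theta(h_2)} \subseteq A_{\theta(h_1 h_2)} + A_{\theta(h_2 h_1)} = (A_H)_{h_1 h_2} + (A_H)_{h_2 h_1}$, and the functoriality checks the paper dismisses as immediate are the routine verifications you spell out. Your additional remark that injectivity of $\theta$ keeps the sum $\oplus_{h \in H} A_{\theta(h)}$ direct is a worthwhile detail the paper leaves implicit.
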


\begin{proof}
Take $h_1 , h_2 \in H$. Then
$( A_H )_{h_1} ( A_H )_{h_2} = 
A_{\theta(h_1)} A_{\theta(h_2)} \subseteq 
A_{\theta(h_1) \theta(h_2)} +   A_{\theta(h_2) \theta(h_1)} =
A_{\theta(h_1 h_2)} + A_{\theta(h_2 h_1)} =
( A_H )_{h_1 h_2} + ( A_H )_{h_2 h_1}$.
The last statement is immediate.
\end{proof}

\begin{prop}
Let $A$ be noncommutatively $H$-graded and
suppose that $\theta : H \rightarrow G$ is a monomorphism of groups.
Define the $G$-graded additive group $\overline{A}$ by
$\overline{A}_g = A_{\theta^{-1}(g)}$, if $g \in \theta(H)$,
and $\overline{A}_g = \{ 0 \}$, if $g \notin \theta(H)$.
Then $\overline{A} = \oplus_{g \in G} \overline{A}_g$ is a 
noncommutatively $G$-graded algebra.
Given a morphism $f : A \rightarrow B$ in $H$-NCGA, 
define the morphism $\overline{f} : \overline{A} \rightarrow \overline{B}$
in $G$-NCGA by the additive extension of the relations
$\overline{f}( a ) = f(a)$, for $a \in \overline{A}_g$
such that $g \in \theta(H)$.
The correspondence $A \mapsto \overline{A}$, on objects of $H$-NCGA,
and $f \mapsto \overline{f}$, on morphisms of $H$-NCGA,
defines a functor $\overline{ ( \cdot ) }$ : $H$-NCGA $\rightarrow$ $G$-NCGA.
\end{prop}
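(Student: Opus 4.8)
The plan is to observe that $\overline{A}$ is, as an $R$-algebra, nothing but $A$ itself, only with a reindexed grading. Since $\theta$ is injective, the assignment $h \mapsto \theta(h)$ is a bijection from $H$ onto $\theta(H)$; in particular $\theta^{-1}(g)$ is a single element whenever $g \in \theta(H)$, so the submodules $\overline{A}_g$ are well defined, and
$$\oplus_{g \in G} \overline{A}_g = \oplus_{g \in \theta(H)} A_{\theta^{-1}(g)} = \oplus_{h \in H} A_h = A$$
as $R$-modules. This gives the direct sum decomposition required by Definition \ref{definitionnoncommutativelygraded} and equips $\overline{A}$ with the multiplication of $A$.

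Next I would check the grading inclusion $\overline{A}_g \overline{A}_{g'} \subseteq \overline{A}_{gg'} + \overline{A}_{g'g}$ for all $g, g' \in G$, splitting into two cases. If $g \notin \theta(H)$ or $g' \notin \theta(H)$, then one of the two factors is $\{ 0 \}$ and the inclusion is trivial. If $g = \theta(h)$ and $g' = \theta(h')$ with $h, h' \in H$, then, using that $A$ is noncommutatively $H$-graded together with the fact that $\theta$ is a group homomorphism (so $\theta(hh') = gg'$ and $\theta(h'h) = g'g$), I obtain
$$\overline{A}_g \overline{A}_{g'} = A_h A_{h'} \subseteq A_{hh'} + A_{h'h} = \overline{A}_{\theta(hh')} + \overline{A}_{\theta(h'h)} = \overline{A}_{gg'} + \overline{A}_{g'g}.$$
Hence $\overline{A}$ is a noncommutatively $G$-graded algebra.

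For the morphisms, I would note that since the nonzero homogeneous components of $\overline{A}$ are exactly the $\overline{A}_g = A_h$ with $g = \theta(h)$, the additive extension defining $\overline{f}$ simply reassembles $f$ from its restrictions to homogeneous components; thus $\overline{f}$ coincides with $f$ as a map, and in particular it is an $R$-algebra homomorphism. It respects the $G$-grading: if $g \notin \theta(H)$ then $\overline{A}_g = \{ 0 \}$ and there is nothing to prove, while if $g = \theta(h)$ then $\overline{f}(\overline{A}_g) = f(A_h) \subseteq B_h = \overline{B}_{\theta(h)} = \overline{B}_g$, because $f$ is a morphism in $H$-NCGA. Functoriality, i.e. $\overline{{\rm id}_A} = {\rm id}_{\overline{A}}$ and $\overline{g \circ f} = \overline{g} \circ \overline{f}$, then follows at once, since on underlying sets $\overline{(\cdot)}$ does nothing.

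There is no genuine obstacle here; the argument is pure bookkeeping with the bijection $H \cong \theta(H)$ and the homomorphism property of $\theta$. The only point demanding a little care is the case distinction according to whether the relevant group elements lie in $\theta(H)$ — which is precisely where injectivity (for the well-definedness of $\theta^{-1}$) and the multiplicativity of $\theta$ (to match $\theta(hh')$ with $gg'$, and $\theta(h'h)$ with $g'g$) are used. This proposition is the natural counterpart, for extension of the grading group along $\theta$, of the preceding proposition on restriction along a monomorphism.
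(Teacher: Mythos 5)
Your proposal is correct and follows essentially the same route as the paper: the same case split according to whether $g,g' \in \theta(H)$, the same use of injectivity of $\theta$ to identify $\overline{A}_{\theta(h)}$ with $A_h$, and the same one-line verification that $\overline{f}$ is graded and that functoriality is immediate. The extra remarks you add (that $\overline{A}$ is $A$ with a reindexed grading and $\overline{f}$ is $f$ on underlying sets) are sound and merely make explicit what the paper leaves implicit.
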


\begin{proof}
Take $g_1,g_2 \in G$. 
Case 1: $g_1 \notin \theta(H)$ or $g_2 \notin \theta(H)$. 
Then $\overline{A}_{g_1} = \{ 0 \}$ or $\overline{A}_{g_2} = \{ 0 \}$ so that 
$\overline{A}_{g_1} \overline{A}_{g_2} = \{ 0 \} \subseteq \overline{A}_{g_1 g_2} + \overline{A}_{g_2 g_1}$.
Case 2: There are $h_1,h_2 \in H$ such that
$\theta(h_1) = g_1$ and $\theta(h_2) = g_2$.
Then $\overline{A}_{g_1} \overline{A}_{g_2} =
A_{\theta^{-1}(g_1)} A_{\theta^{-1}(g_2)} =
A_{h_1} A_{h_2} \subseteq 
A_{h_1 h_2} + A_{h_2 h_1} =
A_{\theta^{-1}(g_1 g_2)} + A_{\theta^{-1}(g_2 g_1)} =
\overline{A}_{g_1 g_2} + \overline{A}_{g_2 g_1}$.

Take $g \in G$ and $h \in H$ such that $\theta(h) = g$.
Suppose that $f : A \rightarrow B$ is a morphism in $H$-NCGA.
Then $f ( \overline{A}_g ) = f ( A_h ) \subseteq B_h = \overline{B}_g$.
If $f' : B \rightarrow C$ is another morphism in $H$-NCGA,
then, clearly, $\overline{f' \circ f} = \overline{f'} \circ \overline{f}$
so that $\overline{ ( \cdot ) }$ is a functor $H$-NCGA $\rightarrow$ $G$-NCGA.
\end{proof}

The next result is a generalization of \cite[Proposition 1.2.1]{nas2004}.

\begin{prop}\label{adjoint}
If  $\theta : H \rightarrow G$ is a monomorphism of groups, then
$( \overline{(\cdot)} , (\cdot)_H ) ) $ is an adjoint pair of functors.
\end{prop}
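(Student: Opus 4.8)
The plan is to establish the adjunction $(\overline{(\cdot)}, (\cdot)_H)$ by exhibiting a natural bijection
\[
\mathrm{Hom}_{G\text{-}\mathrm{NCGA}}(\overline{A}, B) \;\cong\; \mathrm{Hom}_{H\text{-}\mathrm{NCGA}}(A, B_H)
\]
for $A$ in $H$-NCGA and $B$ in $G$-NCGA, and then check naturality in both variables. First I would unwind the underlying $R$-modules: as an $R$-module $\overline{A}$ is just $A$ (with trivial components reindexed along $\theta$ and zero components inserted elsewhere), so an $R$-algebra map $\overline{A}\to B$ is the same data as an $R$-algebra map $A\to B$; the only extra content is the grading condition. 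The map $\overline{f}\colon\overline{A}\to B$ is graded precisely when $\overline{f}(\overline{A}_g)\subseteq B_g$ for all $g\in G$, which is automatic for $g\notin\theta(H)$ (domain component is $\{0\}$) and for $g=\theta(h)$ says $f(A_h)\subseteq B_{\theta(h)} = (B_H)_h$. This is exactly the condition that the corresponding map $A\to B_H$ be a morphism in $H$-NCGA. So I would define the bijection by sending a $G$-graded map $\overline{A}\to B$ to the $H$-graded map $A\to B_H$ given by the same underlying function, and conversely; the two assignments are visibly mutually inverse.

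Next I would verify naturality. Given a morphism $\varphi\colon A'\to A$ in $H$-NCGA and a morphism $\psi\colon B\to B'$ in $G$-NCGA, I must check that the square relating the two Hom-sets commutes, i.e. that precomposition with $\overline{\varphi}$ and postcomposition with $\psi$ on the left side corresponds, under the bijection, to precomposition with $\varphi$ and postcomposition with $\psi_H$ on the right side. Since all of $\overline{\varphi}$, $\psi$, $\varphi$, $\psi_H$ act by the ``same underlying function'' as their unbarred/unsubscripted counterparts on the relevant homogeneous pieces, and the bijection itself is the identity on underlying functions, each of these verifications reduces to the equality of two $R$-linear maps that agree on every homogeneous component — which follows from the explicit definitions of $\overline{(\cdot)}$ on morphisms and of $(\cdot)_H$ on morphisms given in the two preceding propositions.

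The only point requiring genuine care — and what I expect to be the main obstacle — is checking well-definedness at the level of the grading in the presence of the noncommutative grading condition, specifically that $\overline{A}$ and $B_H$ really are objects of the respective categories so that the Hom-sets make sense, and that the bijection respects the $R$-algebra structure and not merely the set-theoretic data. Both of these are handled by the two propositions immediately above, which already prove $\overline{(\cdot)}$ and $(\cdot)_H$ are functors; so in fact I would simply invoke those results and reduce the whole argument to the elementary observation that the hom-set bijection above is the identity on underlying $R$-algebra maps, making every naturality diagram commute trivially. Finally, to be fully rigorous about the unit and counit, I would note that the counit $\varepsilon_B\colon\overline{B_H}\to B$ is the inclusion of the $\theta(H)$-supported part of $B$ (identity on those components, and necessarily zero elsewhere only when it lands in a zero component — but here $\overline{B_H}$ may genuinely be a proper graded subalgebra of $B$), and the unit $\eta_A\colon A\to (\overline{A})_H$ is the identity; the triangle identities then hold by direct inspection on homogeneous elements.
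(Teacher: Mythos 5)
Your proposal is correct and follows essentially the same route as the paper: both establish the adjunction by exhibiting the hom-set bijection $\mathrm{hom}_{G\text{-NCGA}}(\overline{A},B)\cong\mathrm{hom}_{H\text{-NCGA}}(A,B_H)$ (which, as you observe, is the identity on underlying $R$-algebra maps since $\overline{A}$ and $A$ coincide as algebras) and then verify naturality in $A$ and $B$ via the standard commuting square. Your additional identification of the unit and counit is a harmless elaboration beyond what the paper records.
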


\begin{proof}
Suppose that $A$ is a noncommutatively $H$-graded algebra and 
that $B$ is a noncommutatively $G$-graded algebra.
Define a map $$\Phi_{A,B} : {\rm hom}_{G-{\rm NCGA}} ( \overline{A} , B ) \rightarrow
{\rm hom}_{H-{\rm NCGA}} ( A , B_H )$$ in the following way.
Given a morphism $f : \overline{A} \rightarrow B$ in $G$-NCGA,
put $\Phi_{A,B}(f) = f_H$. Then $\Phi_{A,B}$ is a bijection.
In fact, define $$\Phi_{A,B}^{-1} : {\rm hom}_{H-{\rm NCGA}} ( A , B_H ) \rightarrow
{\rm hom}_{G-{\rm NCGA}} ( \overline{A} , B )$$ in the following way.
Given a morphism $f' : A \rightarrow B_H$
in $H$-NCGA, put $\Phi_{A,B}^{-1}(f') = \overline{f'}$.
Given a morphism $p : A' \rightarrow A$ in $H$-NCGA and a 
morphism $q : B \rightarrow B'$ in $G$-NCGA, then 
the following diagram is commutative
\begin{equation}\label{THREE}
\CD
{\rm hom}_{G-{\rm NCGA}} ( \overline{A} , B ) @> \Phi_{A,B} >> {\rm hom}_{H-{\rm NCGA}} ( A , B_H ) \\
@V   \epsilon   VV @VV \delta V \\
{\rm hom}_{G-{\rm NCGA}} ( \overline{A'} , B' ) @> \Phi_{A',B'}  >> {\rm hom}_{H-{\rm NCGA}} ( A' , B'_H ) \\
\endCD 
\end{equation}
Here $\epsilon$ and $\delta$ are defined by $\epsilon(f) = q \circ f \circ \overline{p}$
and $\delta(g) = q_H \circ g \circ p$ for 
$f \in {\rm hom}_{G-{\rm NCGA}} ( \overline{A} , B )$ and 
$g \in {\rm hom}_{H-{\rm NCGA}} ( A , B_H )$. 
Thus $( \overline{(\cdot)} , (\cdot)_H ) ) $ is an adjoint pair of functors.
\end{proof}

The next result generalizes the construction preceding \cite[Proposition 1.2.2]{nas2004}

\begin{prop}\label{epi}
Let $A$ be noncommutatively $G$-graded and
suppose that $\pi : G \rightarrow H$ is an epimorphism of groups. 
Define the $H$-graded additive group $A^H$ by
$(A^H)_h = \oplus_{g \in \pi^{-1}(h)} A_g$ for $h \in H$.
Then $A^H = \oplus_{h \in H} (A_H)_h$ is a 
noncommutatively $H$-graded algebra.
The correspondence $A \mapsto A^H$, on objects of $G$-NCGA,
and by the identity, on morphisms of $G$-NCGA, defines a functor 
$(\cdot)^H$ : $G$-NCGA $\rightarrow$ $H$-NCGA.
\end{prop}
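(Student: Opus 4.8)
The plan is to verify the two assertions in turn: first that $A^H = \oplus_{h \in H} (A^H)_h$ is a noncommutatively $H$-graded algebra, and then that $(\cdot)^H$ is a functor. For the grading, note first that $A = \oplus_{g \in G} A_g = \oplus_{h \in H} \bigl( \oplus_{g \in \pi^{-1}(h)} A_g \bigr) = \oplus_{h \in H} (A^H)_h$, since the fibers $\pi^{-1}(h)$, for $h \in H$, partition $G$ (as $\pi$ is surjective). So the direct sum decomposition as $R$-modules is automatic. For the multiplicative inclusion, take $h_1, h_2 \in H$ and homogeneous pieces; it suffices by $R$-bilinearity to check $A_{g_1} A_{g_2} \subseteq (A^H)_{h_1 h_2} + (A^H)_{h_2 h_1}$ for $g_1 \in \pi^{-1}(h_1)$ and $g_2 \in \pi^{-1}(h_2)$. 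Since $A$ is noncommutatively $G$-graded, $A_{g_1} A_{g_2} \subseteq A_{g_1 g_2} + A_{g_2 g_1}$, and applying $\pi$ gives $\pi(g_1 g_2) = h_1 h_2$ and $\pi(g_2 g_1) = h_2 h_1$, so $A_{g_1 g_2} \subseteq (A^H)_{h_1 h_2}$ and $A_{g_2 g_1} \subseteq (A^H)_{h_2 h_1}$, whence $(A^H)_{h_1} (A^H)_{h_2} \subseteq (A^H)_{h_1 h_2} + (A^H)_{h_2 h_1}$.

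For the functoriality statement, let $f : A \rightarrow B$ be a morphism in $G$-NCGA. The underlying $R$-algebra map is unchanged, so it is automatically an $R$-algebra homomorphism $A^H \rightarrow B^H$; the only thing to check is that it respects the new $H$-grading. For $h \in H$, we have $f\bigl( (A^H)_h \bigr) = f\bigl( \oplus_{g \in \pi^{-1}(h)} A_g \bigr) = \sum_{g \in \pi^{-1}(h)} f(A_g) \subseteq \sum_{g \in \pi^{-1}(h)} B_g = (B^H)_h$, using that $f(A_g) \subseteq B_g$ for all $g$. Since composition of morphisms and identities are preserved on the nose (the underlying maps being unchanged), $(\cdot)^H$ is a functor from $G$-NCGA to $H$-NCGA.

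I do not anticipate any real obstacle here; this is routine. The only point requiring a moment's care is that $\{ \pi^{-1}(h) \}_{h \in H}$ is a partition of $G$, which is exactly where surjectivity of $\pi$ is used (to ensure no fiber is empty, hence that every $A_g$ is accounted for and the $H$-indexed sum is still all of $A$); injectivity of $\pi$ is \emph{not} needed, in contrast to the earlier restriction functors. One should also remark that, unlike the monomorphism case, there is no adjointness claim being made at this stage, so nothing further is required beyond the verifications above.

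\begin{proof}
Since $\pi$ is surjective, the fibers $\pi^{-1}(h)$, for $h \in H$, form a partition of $G$. Hence, as $R$-modules,
$$A = \oplus_{g \in G} A_g = \oplus_{h \in H} \Bigl( \oplus_{g \in \pi^{-1}(h)} A_g \Bigr) = \oplus_{h \in H} (A^H)_h.$$
Now take $h_1, h_2 \in H$. By $R$-bilinearity of the product, it is enough to show that $A_{g_1} A_{g_2} \subseteq (A^H)_{h_1 h_2} + (A^H)_{h_2 h_1}$ whenever $g_1 \in \pi^{-1}(h_1)$ and $g_2 \in \pi^{-1}(h_2)$. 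Since $A$ is noncommutatively $G$-graded, $A_{g_1} A_{g_2} \subseteq A_{g_1 g_2} + A_{g_2 g_1}$. But $\pi(g_1 g_2) = \pi(g_1) \pi(g_2) = h_1 h_2$, so $A_{g_1 g_2} \subseteq \oplus_{g \in \pi^{-1}(h_1 h_2)} A_g = (A^H)_{h_1 h_2}$, and similarly $A_{g_2 g_1} \subseteq (A^H)_{h_2 h_1}$. Therefore $(A^H)_{h_1} (A^H)_{h_2} \subseteq (A^H)_{h_1 h_2} + (A^H)_{h_2 h_1}$, so $A^H$ is noncommutatively $H$-graded.

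Let $f : A \rightarrow B$ be a morphism in $G$-NCGA. The underlying map of $f$ is an $R$-algebra homomorphism from $A^H$ to $B^H$, so it remains only to check compatibility with the $H$-grading. For $h \in H$ we have
$$f\bigl( (A^H)_h \bigr) = f\Bigl( \oplus_{g \in \pi^{-1}(h)} A_g \Bigr) = \sum_{g \in \pi^{-1}(h)} f(A_g) \subseteq \sum_{g \in \pi^{-1}(h)} B_g = (B^H)_h,$$
using $f(A_g) \subseteq B_g$ for all $g \in G$. Finally, since the underlying maps are unchanged, composites and identities are preserved, so $(\cdot)^H$ is a functor from $G$-NCGA to $H$-NCGA.
\end{proof}
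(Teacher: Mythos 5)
Your proof is correct and follows essentially the same route as the paper: expand $(A^H)_{h_1}(A^H)_{h_2}$ over the fibers, apply $A_{g_1}A_{g_2} \subseteq A_{g_1 g_2} + A_{g_2 g_1}$, and use $\pi(g_1 g_2) = h_1 h_2$, $\pi(g_2 g_1) = h_2 h_1$ to land in $(A^H)_{h_1 h_2} + (A^H)_{h_2 h_1}$. You simply spell out the direct-sum decomposition and the functoriality check that the paper leaves as ``clear.''
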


\begin{proof}
Take $h_1,h_2 \in H$. Then 
$$( A^H )_{h_1} ( A^H )_{h_2} = 
( \oplus_{g_1 \in \pi^{-1}(h_1)} A_{g_1} ) ( \oplus_{g_2 \in \pi^{-1}(h_2)} A_{g_2} )$$
$$= \sum_{ g_1 \in \pi^{-1}(h_1), \ g_2 \in \pi^{-1}(h_2) } A_{g_1} A_{g_2} \subseteq
\sum_{ g_1 \in \pi^{-1}(h_1), \ g_2 \in \pi^{-1}(h_2) } A_{g_1 g_2} + A_{g_2 g_1}$$
$$\subseteq \sum_{g \in \pi^{-1}(h_1 h_2)} A_g + 
\sum_{g' \in \pi^{-1}(h_2 h_1)} A_{g'} =
( A^H )_{h_1 h_2} + ( A^H )_{h_2 h_1}.$$
Thus $A^H$ is noncommutatively $H$-graded.
The last part is clear.
\end{proof}

\begin{prop}
Suppose that $N$ is a normal subgroup of $G$ and let $A$ be 
an object in $G/N$-NCGA$_{\lambda,\mu}$.
For each $g \in G$, let $F(A)_g$ be the subset $( \oplus_{n \in N} A_{gn} ) g$
of the group ring $A[G]$. Put $F(A) = ( \oplus_{g \in G} F(A)_g , \lambda , \mu )$. 
Then $F(A)$ is a noncommutatively $G$-graded algebra.
The correspondence $A \mapsto F(A)$, on objects of $G/N$-NCGA$_{\lambda,\mu}$,
and by $F (f) (a_{gn} g) = f(a_{gn}) g$, for $g \in G$, $n \in N$ and $a_{gn} \in A_{gn}$,
on morphisms $f$ of $G/N$-NCGA$_{\lambda,\mu}$,
defines a functor $F : G/N$-NCGA$_{\lambda,\mu}$ $\rightarrow$ $G$-NCGA$_{\lambda,\mu}$.
\end{prop}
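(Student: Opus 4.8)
The plan is to realize $\bigoplus_{g \in G} F(A)_g$ as a $G$-graded $R$-subalgebra of the group ring $A[G]$ and then to apply the construction of Example \ref{lambdamu}. Recall that $A[G] = \bigoplus_{x \in G} A x$, as $R$-modules, is an $R$-algebra under the $R$-bilinear product determined by $(ax)(by) = (ab)(xy)$, for $a, b \in A$ and $x, y \in G$. By construction each $F(A)_g$ is an $R$-submodule of $A g$, and since the submodules $A x$, for $x \in G$, are independent in $A[G]$, the sum $\sum_{g \in G} F(A)_g$ is automatically direct; hence $\bigoplus_{g \in G} F(A)_g$ is a well defined $R$-submodule of $A[G]$.

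The heart of the argument is the grading inclusion $F(A)_g F(A)_h \subseteq F(A)_{gh}$, for $g, h \in G$. By $R$-bilinearity it suffices to consider a product $(ax)(by) = (ab)(xy)$ with $ax \in F(A)_g$ and $by \in F(A)_h$, so that $x \in gN$, $y \in hN$, and $a, b$ lie in the relevant homogeneous components of $A$ with respect to its $G/N$-grading. The key point — and essentially the only place where normality of $N$ is used — is that $gN \cdot hN = (gh)N$, so the group part $xy$ of the product lies in the coset $(gh)N$; combining this with the $G/N$-grading inclusion of $A$ to locate $ab$, one obtains $(ab)(xy) \in F(A)_{gh}$. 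Hence $\bigoplus_{g \in G} F(A)_g$ is a $G$-graded $R$-subalgebra of $A[G]$, and therefore, by Example \ref{lambdamu}, the algebra $F(A) = (\bigoplus_{g \in G} F(A)_g, \lambda, \mu)$ — reading $\lambda$ and $\mu$ on $G$ via the quotient map $G \to G/N$ — is noncommutatively $G$-graded.

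It remains to verify functoriality. Given a morphism $f : A \to B$ in $G/N$-NCGA$_{\lambda,\mu}$, that is, a degree-preserving $R$-algebra homomorphism of the underlying $G/N$-graded algebras, one defines $F(f) : F(A) \to F(B)$ by the additive extension of $F(f)(a x) = f(a) x$, for $x \in G$ and $a$ in the corresponding homogeneous component. Since $f$ preserves $G/N$-degrees, $F(f)$ is well defined and sends $F(A)_g$ into $F(B)_g$; multiplicativity follows from $F(f)((ax)(by)) = f(ab)(xy) = (f(a) f(b))(xy) = (f(a) x)(f(b) y)$ together with bilinearity; and since a degree-preserving algebra homomorphism is automatically compatible with the twisted products introduced in Example \ref{lambdamu}, $F(f)$ is a morphism in $G$-NCGA$_{\lambda,\mu}$. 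The identities $F(\mathrm{id}_A) = \mathrm{id}_{F(A)}$ and $F(f' \circ f) = F(f') \circ F(f)$ are immediate from the defining formula, so $F$ is a functor. The only genuinely substantive step is the grading inclusion above; everything else is routine bookkeeping with group rings, direct sums, and the formal properties of the $(\lambda, \mu)$-construction, the one thing to watch being the systematic use of the normality of $N$.
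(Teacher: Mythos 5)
Your proof is correct and follows essentially the same route as the paper: first establish that $\oplus_{g \in G} F(A)_g$ is a $G$-graded subalgebra of $A[G]$, then invoke the $(\lambda,\mu)$-construction of Example \ref{lambdamu} to conclude that $F(A)$ is noncommutatively $G$-graded and that $F$ is a functor. The only difference is that the paper outsources the first step entirely to the proof of \cite[Proposition 1.2.2]{nas2004}, whereas you carry out the coset computation $gN \cdot hN = (gh)N$ and the functoriality checks explicitly.
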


\begin{proof}
From the proof of \cite[Proposition 1.2.2]{nas2004} it follows that
$\oplus_{g \in G} F(A)_g$ is $G$-graded. Therefore, from the discussion 
in Example \ref{lambdamu} we get that $F(A)$ is noncommutatively $G$-graded
and $F$ defines a functor $G/N$-NCGA$_{\lambda,\mu}$ $\rightarrow$ $G$-NCGA$_{\lambda,\mu}$.
\end{proof}

Now we will show a generalization of \cite[Proposition 1.2.2]{nas2004}
making use of the construction in Example \ref{lambdamu}.

\begin{prop}\label{adjointagain}
Suppose that $N$ is a normal subgroup of $G$ and consider
the canonical epimorphism $\pi : G \rightarrow G/N$.
Let $(\cdot)^{G/N}$ denote the functor
$G$-NCGA$_{\lambda,\mu}$ $\rightarrow$ $H$-NCGA$_{\lambda,\mu}$,
obtained from Proposition \ref{epi} by restriction.
Then $( ( \cdot )^{G/N} , F )$ is an adjoint pair of functors.
\end{prop}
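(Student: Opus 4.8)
The plan is to construct, for every object $A$ of $G$-NCGA$_{\lambda,\mu}$ and every object $B$ of $G/N$-NCGA$_{\lambda,\mu}$, a bijection
\[
\Psi_{A,B} : {\rm hom}_{G/N-{\rm NCGA}_{\lambda,\mu}}( A^{G/N} , B ) \longrightarrow {\rm hom}_{G-{\rm NCGA}_{\lambda,\mu}}( A , F(B) )
\]
which is natural in $A$ and $B$; this exhibits $(\cdot)^{G/N}$ as left adjoint to $F$. First I would recall that, by Example \ref{lambdamu}, the objects of these categories are just $(\lambda,\mu)$-twists of ordinarily graded algebras and the morphisms are precisely the graded $R$-algebra morphisms of the underlying ordinarily graded algebras, and that on those underlying algebras $(\cdot)^{G/N}$ is the functor of Proposition \ref{epi} while $F$ restricts to the construction of \cite[Proposition 1.2.2]{nas2004}. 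Thus the twist is invisible at the level of morphisms: any $R$-algebra morphism $f$ respecting the ordinary products and the gradings automatically satisfies $f(a_g \bullet b_h) = \lambda_{g,h} f(a_g) f(b_h) + \mu_{h,g} f(b_h) f(a_g) = f(a_g) \bullet f(b_h)$, and the whole statement reduces to producing $\Psi_{A,B}$ on the level of ordinarily graded algebras.

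Concretely, given a morphism $f : A^{G/N} \to B$ and a homogeneous element $a \in A_g$, I note that $A_g \subseteq (A^{G/N})_{gN}$, so $f(a) \in B_{gN}$, and I set $\Psi_{A,B}(f)(a) = f(a)\, g \in F(B)_g$, extended additively. Conversely, given $f' : A \to F(B)$ and $a \in A_g$, the element $f'(a)$ lies in $F(B)_g$ and can be written uniquely as $b\, g$ with $b \in B_{gN}$; I set $\Psi_{A,B}^{-1}(f')(a) = b$, extended additively. The steps to carry out are then: (i) $\Psi_{A,B}(f)$ sends $A_g$ into $F(B)_g$, and $\Psi_{A,B}^{-1}(f')$, viewed on $A^{G/N}$, sends $(A^{G/N})_{\bar g} = \oplus_{g' \in \pi^{-1}(\bar g)} A_{g'}$ into $B_{\bar g}$, so both respect the appropriate gradings; (ii) both maps are $R$-algebra homomorphisms, where one uses the definitions of the multiplications in $A^{G/N}$ and in the group ring $B[G]$ together with the normality of $N$, which guarantees that $A_{g_1}A_{g_2} \subseteq A_{g_1 g_2} + A_{g_2 g_1}$ is contained in $(A^{G/N})_{\overline{g_1 g_2}}$; (iii) $\Psi_{A,B}$ and $\Psi_{A,B}^{-1}$ are mutually inverse, which is immediate from the uniqueness of the factorisation $f'(a) = b\, g$.

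Finally I would verify naturality, i.e. that for morphisms $p : A' \to A$ in $G$-NCGA$_{\lambda,\mu}$ and $q : B \to B'$ in $G/N$-NCGA$_{\lambda,\mu}$ the square
\[
\CD
{\rm hom}_{G/N-{\rm NCGA}_{\lambda,\mu}}( A^{G/N} , B ) @> \Psi_{A,B} >> {\rm hom}_{G-{\rm NCGA}_{\lambda,\mu}}( A , F(B) ) \\
@VVV @VVV \\
{\rm hom}_{G/N-{\rm NCGA}_{\lambda,\mu}}( (A')^{G/N} , B' ) @> \Psi_{A',B'} >> {\rm hom}_{G-{\rm NCGA}_{\lambda,\mu}}( A' , F(B') ) \\
\endCD
\]
commutes, where the left vertical map is $f \mapsto q \circ f \circ p^{G/N}$ and the right one is $f' \mapsto F(q) \circ f' \circ p$. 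This is a routine verification on homogeneous elements, using $F(q)(b\, g) = q(b)\, g$ and the fact that $p^{G/N}$ coincides with $p$ on underlying algebras, entirely parallel to the diagram (\ref{THREE}) in the proof of Proposition \ref{adjoint}. Combined with the bijectivity from the previous paragraph, this shows that $( ( \cdot )^{G/N} , F )$ is an adjoint pair.

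The main obstacle is not conceptual but organisational: one must carefully keep apart the ordinary $G$- and $G/N$-gradings, the embedding $F(B) \hookrightarrow B[G]$, and the superimposed $(\lambda,\mu)$-twist, and in particular check in step (ii) that $\Psi_{A,B}(f)$ and $\Psi_{A,B}^{-1}(f')$ really are multiplicative for the twisted products. Once the reduction in the first paragraph is in place, however, no idea beyond the classical graded situation of \cite[Proposition 1.2.2]{nas2004} is needed.
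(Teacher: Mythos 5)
Your proposal is correct and follows essentially the same route as the paper: the same bijection $\Psi_{A,B}(f)(a_g) = f(a_g)g$, the same reduction of bijectivity and naturality to the classical case of \cite[Proposition 1.2.2]{nas2004}, and the same verification that the $(\lambda,\mu)$-twisted product $\bullet$ is respected. You merely spell out the inverse map and the naturality square more explicitly than the paper does.
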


\begin{proof}
Suppose that $A$ is an object in $G$-NCGA$_{\lambda,\mu}$ and 
that $B$ is an object in $G/N$-NCGA$_{\lambda,\mu}$.
Define a map 
$$\Psi_{A,B} : {\rm hom}_{G/N-{\rm NCGA}_{\lambda,\mu}} ( A^{G/N} , B ) \rightarrow
{\rm hom}_{G-{\rm NCGA}_{\lambda,\mu}} ( A , F(B) )$$ in the following way.
Given a morphism $f : A^{G/N} \rightarrow B$ in $G$-NCGA,
put $\Psi_{A,B}(f)(a_g) = f(a_g)g$, for $a_g \in A_g$, and extended biadditively. 
Then $\Psi_{A,B}$ is a bijection (see the proof of \cite[Proposition 1.2.2]{nas2004}).
Now we show that $\Psi_{A,B}(f)$ respects the multiplication $\bullet$.
Take $a_g \in A_g$ and $b_h \in A_h$. Then
$$\Psi_{A,B}(f)( a_g \bullet b_h ) = 
\Psi_{A,B}(f)( \lambda_{g,h} a_g b_h + \mu_{h,g} b_h a_g)$$
$$= f( \lambda_{g,h} a_g b_h ) gh + f (\mu_{h,g} b_h a_g) hg =
\lambda_{g,h} f( a_g b_h ) gh + \mu_{h,g} f( b_h a_g) hg$$
$$=  \lambda_{g,h} f(a_g) f(b_h) gh + \mu_{h,g} f(b_h)f(a_g) hg =
f(a_g)g \bullet f(b_h)h$$
$$= \Psi_{A,B}(f)(a_g) \bullet \Psi_{A,B}(f)(b_h).$$
It also follows from the $G$-graded case (see loc. cit.) 
that the map $\Psi_{A,B}$ is natural in $A$ and $B$.
Therefore, $( ( \cdot )^{G/N} , F )$ is an adjoint pair of functors.
\end{proof}

It is not clear to the author of the present article whether 
the following question can be answered in the affirmative.

\begin{que}
Does the functor $(\cdot)^{G/N} :$ 
$G$-NCGA $\rightarrow$ $H$-NCGA,
obtained from Proposition \ref{epi}, have a right adjoint? 
\end{que}

\section{The Graded Tensor Algebra}\label{gradedtensoralgebra}

In this section, we fix the notation
concerning $G$-graded modules and we state some well known 
results (see Propositions \ref{gradingdirectsum}-\ref{tensor}) 
that will be used in the following section.

Throughout this section, $M$ denotes a left $R$-module.
Recall that $M$ is called $G$-graded if there is a family 
$\{ M_g \}_{g \in G}$ of $R$-submodules of $M$
such that $M = \oplus_{g \in G} M_g$ as $R$-modules.
The next result follows from well-known
properties of direct sums of modules (see e.g. \cite[Chapter III]{lang2002}).

\begin{prop}\label{gradingdirectsum}
If $\{ M^{(i)} \}_{i \in I}$
is a family of $G$-graded modules, then
$\oplus_{i \in I} M^{(i)}$ is a graded module, where, for each $g \in G$,
we put $( \oplus_{i \in I} M^{(i)} )_g =  \oplus_{i \in I} M_g^{(i)}$.
\end{prop}

We will refer to the above grading as the 
canonical direct sum grading.
The next result follows immediately from 
well-known properties of tensor
products of modules (see e.g. \cite[Chapter XVI]{lang2002}).
All tensors are taken over $R$.
Let $\mathbb{N}$ denote the set of non-negative integers.
If $n \in \mathbb{N} \setminus \{ 0 \}$ and $\{ M^{(i)} \}_{i=1}^n$ 
is a family of graded modules, then
we let $\otimes_{i=1}^n M^{(i)}$ denote $M^{(1)} \otimes \cdots \otimes M^{(n)}$.
An element in $\otimes_{i=1}^n M^{(i)}$ of the form
$m_1 \otimes \cdots \otimes m_n$ will be referred to as a monomial.

\begin{prop}\label{gradingtensor}
If $n$ is a positive integer and $\{ M^{(i)} \}_{i=1}^n$ 
is a family of graded modules, then 
$\otimes_{i=1}^n M^{(i)}$ 
is a graded module, where, for each $g \in G$, 
$( \otimes_{i=1}^n M^{(i)} )_g$
is defined to be the submodule of $\otimes_{i=1}^n M^{(i)}$ generated
by all monomials $m_1 \otimes \cdots \otimes m_n$,
where, $m_i \in M_{g_i}^{(i)}$, for $i = 1,\ldots,n$, for some
$g_i \in G$ with the property that $g_1 \cdots g_n = g$.
\end{prop}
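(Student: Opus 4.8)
The plan is to verify that the family of submodules $\{ ( \otimes_{i=1}^n M^{(i)} )_g \}_{g \in G}$, as defined in the statement, realizes $\otimes_{i=1}^n M^{(i)}$ as a direct sum of $R$-modules indexed by $G$. Two things must be checked: first, that the submodules span the whole tensor product, and second, that the sum is direct. Since the construction is explicitly stated to follow from ``well-known properties of tensor products of modules,'' the expected proof is short and reduces everything to the distributivity of $\otimes$ over $\oplus$.

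First I would record the distributivity isomorphism. Writing each $M^{(i)} = \oplus_{g_i \in G} M^{(i)}_{g_i}$ and applying the natural isomorphism that commutes tensor products past direct sums (see \cite[Chapter XVI]{lang2002}), one obtains a canonical $R$-module isomorphism
\[
\otimes_{i=1}^n M^{(i)} \;\cong\; \oplus_{(g_1,\ldots,g_n) \in G^n} \bigl( \otimes_{i=1}^n M^{(i)}_{g_i} \bigr),
\]
under which the monomial $m_1 \otimes \cdots \otimes m_n$ with $m_i \in M^{(i)}_{g_i}$ corresponds to the element sitting in the summand indexed by $(g_1,\ldots,g_n)$. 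Next I would partition the index set $G^n$ according to the value of the product $g_1 \cdots g_n \in G$: for each $g \in G$ let $P_g = \{ (g_1,\ldots,g_n) \in G^n : g_1 \cdots g_n = g \}$, so that $G^n = \bigsqcup_{g \in G} P_g$. Regrouping the direct sum along this partition gives
\[
\otimes_{i=1}^n M^{(i)} \;\cong\; \oplus_{g \in G} \Bigl( \oplus_{(g_1,\ldots,g_n) \in P_g} \otimes_{i=1}^n M^{(i)}_{g_i} \Bigr),
\]
and the $g$-th inner summand is precisely the image of $( \otimes_{i=1}^n M^{(i)} )_g$, which is generated by the monomials $m_1 \otimes \cdots \otimes m_n$ with $\deg$-tuple in $P_g$. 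This establishes both that these submodules sum to the whole space and that the sum is direct, hence that $\otimes_{i=1}^n M^{(i)}$ is $G$-graded with the claimed components.

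The one point requiring a little care — and the closest thing to an obstacle — is making sure the submodule $( \otimes_{i=1}^n M^{(i)} )_g$ as \emph{defined abstractly inside} $\otimes_{i=1}^n M^{(i)}$ (the submodule generated by all monomials whose degrees multiply to $g$) coincides with the image of the corresponding block under the distributivity isomorphism, rather than merely surjecting onto it. This is immediate once one notes that the distributivity isomorphism is the identity on monomials in an obvious sense and that each block $\oplus_{(g_1,\ldots,g_n) \in P_g} \otimes_{i=1}^n M^{(i)}_{g_i}$ is itself generated as an $R$-module by such monomials; so the generating sets match up under the isomorphism. Everything else is bookkeeping with iterated direct sums, for which Proposition \ref{gradingdirectsum} and the references already suffice, so I would keep the write-up to essentially the two displayed isomorphisms above together with this brief remark.
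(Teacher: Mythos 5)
Your argument is correct and is precisely the ``well-known property of tensor products'' that the paper invokes: the paper gives no written proof of Proposition \ref{gradingtensor}, simply citing \cite[Chapter XVI]{lang2002} for the distributivity of $\otimes$ over $\oplus$, which is exactly the isomorphism you make explicit and then regroup by the value of $g_1\cdots g_n$. Your added remark identifying the abstractly generated submodule with the corresponding block of the decomposition is a worthwhile precision, but the route is the same.
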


Take $n \in \mathbb{N}$ and suppose that $M$ is a
$G$-graded module. If $n=0$, then put $M^{\otimes n} = R$,
where the latter is trivially graded, that is $R_e = R$
and $R_g = \{ 0 \}$, if $g \in G \setminus \{ e \}$.
If $n \geq 1$, then put 
$M^{\otimes n} = M \otimes \cdots \otimes M$
($n$ times) and let $M^{\otimes n}$ be equipped with the grading 
introduced in Proposition \ref{gradingtensor}.
We will refer to this as the canonical tensor grading on $M^{\otimes n}$.
Recall that the tensor algebra $T(M)$
is defined to be the direct sum 
$\oplus_{n \in \mathbb{N}} M^{\otimes n} =
R \oplus M \oplus (M \otimes M) \oplus (M \otimes M \otimes M) \oplus \cdots$ 
as a module. The multiplication in $T(M)$ is indicated by $\otimes$
and is defined on monomials in the following way.
Take $m,n \in \mathbb{N}$. Take monomials 
$x \in M^{\otimes m}$ and $y \in M^{\otimes n}$.
If $m = 0$ (or $n = 0$), then $x \in R$ (or $y \in R$)
and we put $x \otimes y = xy$ (or $x \otimes y = yx$)
as elements in the module $M^{\otimes n}$ (or $M^{\otimes m}$).
If $m,n \geq 1$, then 
$x = v_1 \otimes \cdots \otimes v_m$ and
$y = w_1 \otimes \cdots \otimes w_n$, for some
$v_1,\ldots,v_m,w_1,\ldots,w_n \in M$, and we put
$x \otimes y = 
v_1 \otimes \cdots v_m \otimes w_1 \otimes \cdots \otimes w_n.$
Define the structure of a $G$-graded module on $T(M)$ in the following way.
Let $T(M)$ be equipped with the canonical direct sum grading,
defined, in turn, by the canonical tensor gradings
on $\{ M^{\otimes n} \}_{ n \in \mathbb{N} }$.
In other words, for each $g \in G$, put
$T(M)_g = \oplus_{n \in \mathbb{N}} ( M^{\otimes n} )_g$.
From Proposition \ref{gradingdirectsum} and 
Proposition \ref{gradingtensor} it follows that
$T(M)$ is a graded module. 
We will refer to this grading as the canonical 
grading on $T(M)$.

\begin{prop}\label{tensor}
If $M$ is $G$-graded module, then $T(M)$ is $G$-graded
as an algebra.
\end{prop}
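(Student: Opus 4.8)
The plan is to use the fact, established in the paragraph preceding the statement, that $T(M)$ is already a $G$-graded module with $T(M)_g = \oplus_{n \in \mathbb{N}} (M^{\otimes n})_g$; what remains is to verify the multiplicative condition $T(M)_g T(M)_h \subseteq T(M)_{gh}$ for all $g,h \in G$. Since the multiplication $\otimes$ on $T(M)$ is $R$-bilinear and, by Proposition \ref{gradingtensor}, each $(M^{\otimes n})_g$ is spanned by monomials $m_1 \otimes \cdots \otimes m_n$ with $m_i \in M_{g_i}$ and $g_1 \cdots g_n = g$, it suffices to check the inclusion on such generating monomials. So I would fix $g,h \in G$ and consider a monomial $x \in (M^{\otimes m})_g$ and a monomial $y \in (M^{\otimes n})_h$ for $m,n \in \mathbb{N}$.

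First I would dispose of the degenerate cases $m=0$ or $n=0$. If $m=0$, then $x \in M^{\otimes 0} = R$, and since $R$ carries the trivial grading, $x$ homogeneous of degree $g$ forces $g=e$ (the case $x=0$ being trivial); then $x \otimes y = xy$ is a scalar multiple of $y$, hence lies in $(M^{\otimes n})_h = (M^{\otimes n})_{eh} = (M^{\otimes n})_{gh} \subseteq T(M)_{gh}$. The case $n=0$ is symmetric.

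For $m,n \geq 1$ I would write $x = v_1 \otimes \cdots \otimes v_m$ with $v_i \in M_{g_i}$ and $g_1 \cdots g_m = g$, and $y = w_1 \otimes \cdots \otimes w_n$ with $w_j \in M_{h_j}$ and $h_1 \cdots h_n = h$. By the definition of the multiplication in $T(M)$, $x \otimes y = v_1 \otimes \cdots \otimes v_m \otimes w_1 \otimes \cdots \otimes w_n$, which is a monomial in $M^{\otimes(m+n)}$ whose tensor factors have degrees $g_1, \ldots, g_m, h_1, \ldots, h_n$ with product $g_1 \cdots g_m h_1 \cdots h_n = gh$. By Proposition \ref{gradingtensor} this monomial lies in $(M^{\otimes(m+n)})_{gh}$, hence in $T(M)_{gh}$. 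Since monomials of the stated form span $(M^{\otimes m})_g$ and $(M^{\otimes n})_h$ and $\otimes$ is $R$-bilinear, this yields $(M^{\otimes m})_g (M^{\otimes n})_h \subseteq T(M)_{gh}$ for all $m,n$, and summing over $m$ and $n$ gives $T(M)_g T(M)_h \subseteq T(M)_{gh}$, as required.

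There is essentially no hard step here: the statement is a bookkeeping consequence of the definitions, since concatenation of monomials corresponds to multiplying their degrees in $G$. The only points requiring a little care are the handling of the degree-zero component $M^{\otimes 0} = R$ with its trivial grading, and the observation that reducing to generating monomials is legitimate because the multiplication is $R$-bilinear.
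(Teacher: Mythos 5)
Your proposal is correct and follows essentially the same route as the paper's proof: reduce to homogeneous monomials, dispose of the cases $m=0$ or $n=0$ via the trivial grading on $R$, and observe that concatenation of monomials multiplies the degrees so that $x \otimes y \in (M^{\otimes(m+n)})_{gh}$. Your version is slightly more careful than the paper's in spelling out why the reduction to monomials is legitimate and in treating the degenerate cases explicitly, but the argument is the same.
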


\begin{proof}
Take $g,h \in G$, $x \in T(M)_g$ and $y \in T(M)_h$. We wish to show that
$x \otimes y \in T(M)_{gh}$.
Since this is clear if $m=0$ or $n=0$, we only need to consider
the case when $m,n \geq 1$.
We may assume that $x$ and $y$ are monomials in,
respectively, $(M^{\otimes m})_g$ and $(M^{\otimes n})_h$.
Therefore there are $g_1,\ldots,g_m,h_1,\ldots,h_n \in G$,
$v_i \in M_{g_i}$, for $i = 1,\ldots,m$, and
$w_j \in M_{h_j}$, for $j = 1,\ldots,n$, such that
$g = g_1 \cdots g_m$, $h = h_1 \cdots h_n$,
$x = v_1 \otimes \cdots \otimes v_m$ and
$y = w_1 \otimes \cdots \otimes w_n$.
%We wish to show that $xy \in ( M^{\otimes (m+n)} )_{gh}$.
Then, since
$g_1 g_2 \cdots g_m h_1 h_2 \cdots h_n = gh$, we get that
$xy = v_1 \otimes \cdots \otimes v_m  
\otimes w_1 \otimes \cdots \otimes w_n \in ( M^{\otimes (m+n)} )_{gh}$.
\end{proof}

\section{Noncommutatively graded Lie algebras}\label{gradedliealgebras}

In this section, we study the
particular instance of noncommutatively $G$-graded Lie algebras
(see Definition \ref{defngradedliealgebras}),
we establish the existence of universal graded enveloping algebras 
(see Proposition \ref{lieenveloping}) and we show a 
graded version of the Poincar\'{e}-Birkhoff-Witt theorem
(see Proposition \ref{pbw}).

For the rest of the article, $L$ denotes a Lie algebra.
By this we mean that $L$ is an $R$-algebra which is equipped
with an $R$-bilinear product $[\cdot,\cdot] : L \times L \rightarrow L$
such that for all $a,b,c \in L$ the relations
$[a,a] = 0$ and $[a,[b,c]] + [c,[a,b]] + [b,[c,a]] = 0$ hold.

\begin{defn}\label{defngradedliealgebras}
We say that $L$ is a noncommutatively $G$-graded Lie algebra
if there is a family 
$\{ L_g \}_{g \in G}$ of $R$-submodules of $L$
such that $L = \oplus_{g \in G} L_g$, as $R$-modules, and 
for all $g,h \in G$ the inclusion $[L_g , L_h] \subseteq L_{gh} + L_{hg}$ holds.
We say that the category of noncommutatively $G$-graded Lie algebras,
denoted by $G$-NCGLA, is obtained by taking
noncommutatively $G$-graded Lie algebras as objects and
and for the morphisms between such objects
$L$ and $L'$ we take the Lie algebra homomorphisms
$f : L \rightarrow L'$ satisfying $f(L_g) \subseteq L_g'$, for $g \in G$. 
\end{defn}

\begin{rem}
If we let $G$-GAA denote the subcategory of $G$-GA having
associative $G$-graded algebras as objects, then 
the commutator (\ref{defcommutator}) defines a functor
Lie : $G$-GAA $\rightarrow$ $G$-NCGLA.
\end{rem}

Now we extend the definition of a universal enveloping 
algebra (see e.g. \cite[Chapter V]{jacobson1979}) to the noncommutatively graded situation.

\begin{defn}
Let $L$ be a noncommutatively $G$-graded Lie algebra.
A pair $( U , i )$, where $U$ is an 
associative unital $G$-graded algebra and
$i : L \rightarrow \mbox{Lie}(U)$
is a morphism in $G$-NCGLA,
is called a universal $G$-graded enveloping algebra
if the following holds:
if $A$ is any associative unital $G$-graded algebra
and $j : L \rightarrow \mbox{Lie}(A)$ is a morphism in $G$-NCGLA,
then there exists a unique morphism 
$k : \mbox{Lie}(U) \rightarrow \mbox{Lie}(A)$
in $G$-NCGLA such that $j = k \circ i$.
\end{defn}

\begin{prop}\label{lieenveloping}
Every noncommutatively $G$-graded Lie algebra
which is free with a homogeneous basis 
has a universal $G$-graded enveloping algebra.
\end{prop}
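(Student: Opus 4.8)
The plan is to mimic the classical construction of the universal enveloping algebra via the tensor algebra, but carried out entirely within the $G$-graded (and noncommutatively $G$-graded) setting using the machinery of Section~\ref{gradedtensoralgebra}. Suppose $L$ is a noncommutatively $G$-graded Lie algebra which is free as an $R$-module with a homogeneous basis $\{ x_i \}_{i \in \Lambda}$, where $\deg(x_i) = g_i$. Since $L = \oplus_{g \in G} L_g$ with each $L_g$ free, $L$ is in particular a $G$-graded $R$-module, so by Proposition~\ref{tensor} the tensor algebra $T(L)$ is a $G$-graded associative algebra, hence an object of $G$-GAA and, via the commutator, of $G$-NCGLA. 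First I would form the two-sided ideal $J$ of $T(L)$ generated by all elements of the form $a \otimes b - b \otimes a - [a,b]$, where $a,b$ range over homogeneous elements of $L$. The key point is that each such generator is \emph{homogeneous}: if $\deg(a) = g$ and $\deg(b) = h$, then $a \otimes b$ and $b \otimes a$ lie in $T(L)_{gh}$ and $T(L)_{hg}$ respectively, while $[a,b] \in L_{gh} + L_{hg} \subseteq T(L)_{gh} + T(L)_{hg}$; thus the generator lives in $T(L)_{gh} + T(L)_{hg}$, which is the noncommutatively graded closure of the relevant components. So $J$ is generated by elements lying in sums of at most two homogeneous components, and I would set $U = T(L)/J$, equipped with the quotient grading $U_g = (T(L)_g + J)/J$.

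The next step is to verify that $U$ is genuinely $G$-graded as an \emph{associative} algebra (not merely noncommutatively graded). The subtlety is that $J$ is not a graded ideal in the strict sense, because its generators are not homogeneous. The remedy is the standard one: decompose each generator $a \otimes b - b \otimes a - [a,b]$ into its $gh$-component $a \otimes b - [a,b]_{gh}$ and its $hg$-component $-b\otimes a + [a,b]_{hg}$ modulo rewriting, but actually the cleanest route is to observe that $J$ equals the ideal generated by the homogeneous elements obtained by projecting the generators onto each component — equivalently, one checks directly that $J = \oplus_{g} (J \cap T(L)_g)$ by an argument using that each relator, together with enough of the others, spans a graded submodule. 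Once $J$ is graded, $U = \oplus_g U_g$ with $U_g U_h \subseteq U_{gh}$, so $U \in G$-GAA and $\mathrm{Lie}(U) \in G$-NCGLA. The composite $i : L \hookrightarrow T(L) \twoheadrightarrow U$ is a morphism of $R$-modules respecting the grading, and by construction of $J$ it sends $[a,b]$ to $i(a)i(b) - i(b)i(a)$, hence is a morphism in $G$-NCGLA into $\mathrm{Lie}(U)$.

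For the universal property, given any associative unital $G$-graded algebra $A$ and a morphism $j : L \rightarrow \mathrm{Lie}(A)$ in $G$-NCGLA, the $R$-linearity of $j$ extends uniquely to an $R$-algebra homomorphism $\tilde{\jmath} : T(L) \rightarrow A$ by the universal property of the tensor algebra; since $j$ is graded, $\tilde{\jmath}$ is a morphism in $G$-GAA. Because $j([a,b]) = j(a)j(b) - j(b)j(a)$ in $A$, every generator of $J$ lies in $\ker \tilde{\jmath}$, so $\tilde{\jmath}$ factors through a unique graded algebra homomorphism $k : U \rightarrow A$ with $k \circ i = j$; this $k$, viewed as $\mathrm{Lie}(U) \rightarrow \mathrm{Lie}(A)$, is the desired morphism in $G$-NCGLA, and uniqueness follows since $i(L)$ generates $U$ as an algebra. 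The main obstacle I expect is the verification that the ideal $J$ is a graded ideal of $T(L)$, i.e.\ that passing to the quotient genuinely yields a $G$-graded associative algebra rather than only a noncommutatively graded one; this is where the hypothesis that $L$ has a \emph{homogeneous} basis is essential, since it lets one write every element of $L$, and hence every relator, in terms of homogeneous pieces and control the graded components of $J$ explicitly.
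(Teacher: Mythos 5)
Your overall strategy is the same as the paper's: form $T(L)$ with the canonical grading of Proposition \ref{tensor}, quotient by the ideal generated by the relators $a \otimes b - b \otimes a - [a,b]$ for homogeneous $a,b$, and obtain the universal property by factoring the extension of $j$ to $T(L)$ through the quotient. The universal-property half of your argument is fine and essentially matches the paper, which invokes the ungraded universality of $(U,i)$ and checks that the resulting $k$ is graded by evaluating it on monomials in the homogeneous basis; your route via the universal property of the tensor algebra is equivalent.

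The gap is exactly at the point you flag yourself: you never actually prove that $J$ is a graded ideal, i.e.\ that $J = \oplus_{g}(J \cap T(L)_g)$, and neither remedy you sketch clearly works. If $a \in L_g$ and $b \in L_h$ with $gh \neq hg$, the relator decomposes as $(a \otimes b - [a,b]_{gh}) - (b \otimes a + [a,b]_{hg})$, with the two pieces lying in $T(L)_{gh}$ and $T(L)_{hg}$ respectively; replacing the generators by these homogeneous projections (your ``cleanest route'') produces an a priori strictly larger ideal $J'$, and the quotient $T(L)/J'$ would impose the component relations $i(a)i(b) = i([a,b]_{gh})$ and $i(b)i(a) = -i([a,b]_{hg})$ separately. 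A morphism $j : L \rightarrow \mathrm{Lie}(A)$ only satisfies the combined relation $j(a)j(b) - j(b)j(a) = j([a,b])$, so $\tilde{\jmath}$ need not annihilate $J'$ and the universal property would fail for that quotient. Your alternative suggestion (``each relator, together with enough of the others, spans a graded submodule'') is precisely the assertion that requires proof, and no argument is supplied; nor is the homogeneous-basis hypothesis used in the way you claim, since $L = \oplus_g L_g$ already writes every relator in terms of homogeneous elements of $L$ without reference to a basis. For comparison, the paper disposes of this point in one line, asserting that $I = T(L) X T(L)$ with $X$ the set of relators built from homogeneous $a,b$ and concluding that $I$ is graded, without confronting the fact that the elements of $X$ with $gh \neq hg$ are themselves inhomogeneous. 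So you have correctly isolated the crux of the proposition, but your proposal leaves it unproved.
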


\begin{proof}
We proceed exactly as in classical ungraded case (see e.g. \cite[Chapter V]{jacobson1979}).
Let $I$ be the ideal of $T(L)$
generated by all elements of the form
\begin{equation}\label{first}
[a,b] - a \otimes b + b \otimes a
\end{equation}
for $a,b \in L$ and put $U = T(L) / I.$
Let $X$ denote the set of all elements 
of the form (\ref{first}) with $a$ and $b$ homogeneous.
Then $I = T(L) X T(L)$, so $I$ is a $G$-graded ideal.
Therefore $U$ is a unital associative $G$-graded algebra.
Let $l : L \rightarrow T(L)$ denote the inclusion and let
$q : T(L) \rightarrow U$ denote the quotient map.
Let $i : L \rightarrow U$ be the composition of the graded maps
$l : L \rightarrow T(L)$ and $q : T(L) \rightarrow U$.
Let $B = \{ b_v \}_{v \in V}$ be a a homogeneous basis for $L$.
Suppose that $A$ is a unital graded algebra and there
is a graded homomorphism $j : L \rightarrow {\rm Lie}(A).$
By ungraded universality of $(U,i)$ there is a unique
homomorphism $k : U \rightarrow A$ such that
$j = k \circ i$. The map $k$ is defined by
$k(1) = 1$ and 
$k( b_{v_1} \otimes \cdots \otimes b_{v_n} ) = 
j( b_{v_1} ) \otimes \cdots \otimes j( b_{v_n} ).$
Since $j$ is graded, $k$ is also graded.
\end{proof}

\begin{cor}
Every noncommutatively $G$-graded Lie algebra
over a field has a universal graded enveloping algebra.
\end{cor}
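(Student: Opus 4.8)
The plan is to derive this corollary from Proposition \ref{lieenveloping} by showing that the hypothesis of that proposition is automatically met when $R$ is a field. Concretely, let $L$ be a noncommutatively $G$-graded Lie algebra over a field $R$. Since each homogeneous component $L_g$ is an $R$-submodule, i.e.\ a vector space over the field $R$, it has a basis $B_g$. Then $B = \bigcup_{g \in G} B_g$ is a basis for $L = \oplus_{g \in G} L_g$ consisting entirely of homogeneous elements, so $L$ is free as an $R$-module with a homogeneous basis.

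Having established that $L$ is free with a homogeneous basis, I would simply invoke Proposition \ref{lieenveloping} to conclude that $L$ has a universal $G$-graded enveloping algebra, which completes the proof. The only point requiring the field hypothesis is the existence of a basis for each $L_g$; over a general ring this can fail, which is exactly why Proposition \ref{lieenveloping} was stated with the freeness assumption rather than unconditionally.

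I do not anticipate any genuine obstacle here: the corollary is a direct specialization, and the one nontrivial input --- that vector spaces have bases, hence so does their direct sum --- is standard linear algebra. The only mild care needed is to note that the basis assembled from the $B_g$ is genuinely homogeneous (each basis vector lies in a single $L_g$) and genuinely a basis for all of $L$ (this follows from the directness of the sum $L = \oplus_{g \in G} L_g$). After that, Proposition \ref{lieenveloping} applies verbatim.
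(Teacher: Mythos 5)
Your proposal is correct and follows essentially the same route as the paper: choose a basis $B_g$ for each component $L_g$ (using that $R$ is a field), observe that $B = \cup_{g \in G} B_g$ is a homogeneous basis for $L$ by directness of the sum, and then apply Proposition \ref{lieenveloping}. No further comment is needed.
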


\begin{proof}
Suppose that $R$ is a field. For each $g \in G$
choose a basis $B_g$ for $L_g$. 
It is clear that $B = \cup_{g \in G} B_g$ is a homogogeneous
basis for $L$. The claim now follows directly from Proposition \ref{lieenveloping}.
\end{proof}

The following result is a graded analogue of the 
Poincar\'{e}-Birkhoff-Witt theorem.

\begin{prop}\label{pbw}
Suppose that $L$ is a noncommutatively $G$-graded Lie algebra
which is free with a homogeneous basis $B = \{ b_v \}_{v \in V}$
where the set $V$ is equipped with a total order $\leq$.
Then the cosets
$1 + I$ and
$b_{v_1} \otimes \cdots \otimes b_{v_n} + I,$
where $v_1 < \cdots < v_n$,
form a homogeneous basis for $U$.
\end{prop}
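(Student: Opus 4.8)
The plan is to reduce the graded PBW statement to the classical, ungraded Poincaré--Birkhoff--Witt theorem and then check that the classical basis is in fact homogeneous. First I would invoke the classical PBW theorem for the free Lie algebra $L$ with basis $B = \{ b_v \}_{v \in V}$: since $(U,i)$ is (by the proof of Proposition \ref{lieenveloping}) the ordinary universal enveloping algebra of $L$, the classical theorem tells us that the cosets $1 + I$ together with the ordered monomials $b_{v_1} \otimes \cdots \otimes b_{v_n} + I$ with $v_1 < \cdots < v_n$ form an $R$-module basis for $U$. (Here one uses that $L$ is free as an $R$-module with the given basis, which is exactly the hypothesis; over a general ring this freeness is what makes classical PBW applicable.) So the content that remains is purely about the grading: that this already-known basis consists of homogeneous elements and that it respects the $G$-grading on $U$ coming from Proposition \ref{tensor} and the fact that $I$ is a graded ideal.

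Next I would verify homogeneity. Each $b_v$ is homogeneous, say of degree $g_v := {\rm deg}(b_v)$. By the definition of the canonical tensor grading on $T(L)$ (Proposition \ref{gradingtensor}) the monomial $b_{v_1} \otimes \cdots \otimes b_{v_n}$ lies in $T(L)_{g_{v_1} \cdots g_{v_n}}$, hence is homogeneous in $T(L)$; and $1 \in T(L)_e = R$. Since, as established in the proof of Proposition \ref{lieenveloping}, the ideal $I = T(L) X T(L)$ is $G$-graded, the quotient map $q : T(L) \to U$ is a graded map, so each coset $b_{v_1} \otimes \cdots \otimes b_{v_n} + I$ is homogeneous in $U$ of degree $g_{v_1} \cdots g_{v_n}$, and $1 + I$ is homogeneous of degree $e$. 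Thus the classical PBW basis is indeed a set of homogeneous elements, which is the first half of what we must prove.

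Finally I would promote "$R$-module basis consisting of homogeneous elements" to "homogeneous basis," i.e. that for each $g \in G$ the subset of those basis cosets of degree $g$ forms an $R$-basis of $U_g$. This is a general fact: if a graded module has an $R$-basis all of whose members are homogeneous, then grouping the basis by degree gives an $R$-basis of each homogeneous component. Concretely, given $u \in U_g$, expand $u$ in the PBW basis; collecting terms by the degree of each basis element and using directness of the sum $U = \oplus_{g} U_g$ forces every basis element appearing with nonzero coefficient to have degree $g$. Hence the degree-$g$ part of the PBW basis spans $U_g$ and is linearly independent there, as desired. I do not expect a serious obstacle here; the only point requiring care is making sure the hypotheses of the classical PBW theorem are met over the general ground ring $R$, which is guaranteed precisely by the assumption that $L$ is $R$-free with the homogeneous basis $B$. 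Everything else is bookkeeping with the canonical gradings already set up in Section \ref{gradedtensoralgebra}.
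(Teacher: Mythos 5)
Your proposal is correct and follows essentially the same route as the paper, which simply cites the ungraded Poincar\'{e}-Birkhoff-Witt theorem; you additionally spell out the homogeneity verification (each ordered monomial lies in a single component of the canonical tensor grading, $I$ is a graded ideal, and a homogeneous $R$-basis of a graded module restricts to bases of the components), which the paper leaves implicit. No gap.
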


\begin{proof}
This follows from the ungraded Poincar\'{e}-Birkhoff-Witt 
theorem (see e.g. \cite[Chapter V]{jacobson1979}).
\end{proof}

\end{document}